\newcommand{\argmin}{\mathop{\mathrm{arg\,min}}}
\newcommand{\Knn}{{\mathcal K}_{nn}}
\newcommand{\pinn}{\pi^{*}_{\Knn}}
\newcommand{\beps}{\varepsilon}
\newcommand\pig[2]{\pi^{#2}_{G}(#1)}
\newcommand\pin[2]{\pi^{#2}_{\Knn}(#1)}
\newcommand\pit[2]{{\pi^{#2}_{\cal T}(#1)}}
\newcommand\mg[3]{\langle #1\to#2\rangle^{#3}_{G}}
\newcommand\mgn[3]{\langle #1\to#2\rangle^{#3}_{\Knn}}
\newcommand\mgt[3]{\langle #1\to#2\rangle^{#3}_{\cal T}}
\newcommand\ew[2]{\big\|#1,#2\big\|_G}
\newcommand\ewn[2]{\big\|#1,#2\big\|_{\Knn}}
\newcommand\ewt[2]{\big\|#1,#2\big\|_{\cal T}}
\newcommand\dif[3]{\ew{#1}{#2}-\mg{#1}{#2}{#3}}
\newcommand\difn[3]{\ewn{#1}{#2}-\mgn{#1}{#2}{#3}}
\newcommand\dift[3]{\ewt{#1}{#2}-\mgt{#1}{#2}{#3}}
\title{Belief propagation: an asymptotically optimal algorithm \\
for the random 
assignment problem}
\keywords{Belief propagation; random assignment problem; local weak convergence; 
correlation decay; Poisson weighted infinite tree.}
\begin{document}
\maketitle

\begin{abstract}

The random assignment problem asks for the minimum-cost perfect  
 matching in the complete $n\times n$ bipartite graph $\Knn$ with i.i.d. edge 
weights, say uniform on $[0,1]$. In a remarkable work by Aldous
(2001), the optimal cost was
 shown to converge to $\zeta(2)$ as $n\to\infty$, as
 conjectured by M\'ezard and Parisi (1987) through the so-called cavity
 method.
 The latter also suggested a non-rigorous decentralized  strategy for finding the optimum, which turned out to be an instance of the Belief Propagation (BP) heuristic discussed by Pearl (1987). In this paper
 we use the objective method to analyze the
 performance of BP as the size of the underlying graph becomes large. Specifically, we establish that the dynamic of BP on $\Knn$
 converges in distribution as $n\to\infty$ to an appropriately defined
 dynamic on the Poisson Weighted Infinite Tree, and we then prove
 correlation decay for this limiting dynamic. 
As a consequence, we obtain that  BP finds an
asymptotically correct assignment in $O(n^2)$ time only. This contrasts with both the worst-case upper bound
for convergence of BP derived by Bayati, Shah and Sharma (2005) and
the best-known computational cost of $\Theta(n^3)$ achieved by Edmonds and Karp's algorithm (1972).
\end{abstract}
\section{Introduction.}
Given a matrix of $n^2$ costs $(X_{i,j})_{1\leq i,j\leq n}$,
the assignment problem consists of determining a permutation $\pi$ of
$\{1,\ldots,n\}$ whose total cost $\sum_{i=1}^{n}{X_{i,\pi(i)}}$ is minimal. This is equivalent to finding a minimum-weight complete matching
in the $n\times n$ complete bipartite graph whose $n^2$ edges are
weighted by the $(X_{i,j})$. Recall that a complete matching on
a graph is a subset of pairwise disjoint
edges covering all vertices. Here we consider
the so-called random assignment problem where the $(X_{i,j})$ are
i.i.d. with cumulative distribution function denoted by $H$, i.e. $H(t)=\mathbb
P(X_{i,j}\leq t)$. We let $\Knn$ denote the resulting randomly weighted
$n\times n$ bipartite graph and $\pinn$ its optimal matching. Observe that the continuity of $H$ is a necessary and sufficient condition for
$\pinn$ to be a.s. unique. We are interested in the
 convergence of the BP heuristic for finding $\pinn$ as $n$ increases to infinity.

\subsection{Related Work.}
Although it seems cunningly simple, the assignment problem has led to
rich development in combinatorial probability and algorithm design since
the early 1960s. Partly motivated to obtain insights for better algorithm
design,  the question  of finding asymptotics of the average cost of
$\pinn$  became of
great interest (see \cite{walkup, DFM, Karp,Lazarus, Olin, GK, CS}).
In 1987, through cavity method based
calculations, M\'ezard and Parisi \cite{MezPar} conjectured that, for
Exponential(1) edge weights,
$$\mathbb E\left[\sum_{i=1}^{n}{X_{i,\pinn(i)}}\right]\xrightarrow[n\to\infty]{}\zeta(2).$$
This was rigorously established by Aldous \cite{zeta} more than a decade
later, leading to the formalism of ``the objective method'' (see
survey by Aldous and Steele \cite{objective}). In 2003, an exact version of the
above conjecture 
was independently established  by Nair, Prabhakar and Sharma \cite{NPS}
and Linusson and W\.astlund \cite{LW}.

On the algorithmic aspect, the assignment problem has been extremely well
studied and its consideration laid foundations for the rich theory of
network flow algorithms. The best known algorithm is by Edmonds and Karp
\cite{edmund-karp} and takes $O(n^3)$ operations in the worst-case for arbitrary
instance. For i.i.d. random edge weights, Karp \cite{mnlogn}
designed a special implementation of the augmenting path approach using priority
queues  that works in expected time $O(n^2 \log n)$. Concurrently, the statistical
physics-based approach mentioned above suggested a non-rigorous
decentralized strategy which turned out to be an instance of the more general BP heuristic, popular in artificial intelligence (see, book by Pearl
\cite{pearl} and work by Yedidia, Freeman and Weiss \cite{BP}). In a recent
work, one of the authors of the present paper, Shah along with Bayati and
Sharma \cite{BSS}, established correctness of this iterative scheme for any
instance of the assignment problem, as long as the optimal solution is
unique. More precisely, they showed exact convergence within at most
$\lceil \,\frac{2 n \max_{i,j}  X_{i,j}}{\varepsilon}\rceil$ iterations,
where $\varepsilon$ denotes the difference of weights between optimum and
second optimum. This upper bound is always greater than $n$, and can be shown to
scale like $\Theta(n^2)$ as $n$ goes to infinity in the random model. Since
each iteration of the BP algorithm needs $\Theta(n^2)$ operations to be
performed, one is left with an upper bound of $O(n^4)$ for the total computation
cost. However, simulation studies tend to show much better performances on
average than what is suggested by this worst-case analysis.

\subsection{Our contribution.}
Motivated by the above discussion, we consider here the question of
determining the convergence rate of BP for the random assignment
problem. We establish that, for a large class of edge-weight distributions,
the number of iterations required in order to find an almost optimal
assignment remains in fact bounded as $n\to\infty$. Thus, the total
computation cost scales as $O(n^2)$ only, in sharp contrast with both the
worst-case upper bound for exact convergence of BP derived in \cite{BSS}
and the $\Theta(n^3)$ bound achieved by Edmonds and Karp's
algorithm. Clearly, no algorithm can perform better than $\Omega(n^2)$,
since it is the size of the input. That is, BP is an asymptotically optimal
algorithm on average.

\section{Result and organization.}

\subsection{BP algorithm.}
As we shall see later, the dynamics of BP on $\Knn$ happens to converge to
the dynamics of BP on a limiting infinite tree. Therefore, we define the BP
algorithm for an arbitrary weighted graph $G=(V,E)$. We use notation that
the weight of $\{v,w\} \in E$ is $\|v,w\|_G$. By $w \sim v$, we denote that
$w$ is a neighbor of $v$ in $G$. Note that a complete matching on $G$ can
be equivalently seen as an involutive mapping $\pi_G$ connecting each
vertex $v$ to one of its neighbors $\pi_G(v)$. We shall henceforwards use
this mapping representation rather than the edge set description.

The BP algorithm is distributed and iterative. Specifically, in each
iteration $k\geq 0$, every vertex $v\in V$
sends a real-valued message $\mg{v}{w}{k}$ to each of its neighbor $w\sim
v$ as follows:
\begin{itemize}
\item {\em initialization rule: }
\begin{equation}
\label{bp0}
\mg{v}{w}{0}  = 0 \frac{}{};
\end{equation}
\item {\em update rule: }
\begin{equation}
\mg{v}{w}{k+1} = \min_{u\sim v,u\neq w}{\left\{\dif{u}{v}{k}\right\}}.
\end{equation}
\end{itemize}
Based on those messages, every vertex $v\in V$ estimates the neighbor
$\pi^k_G(v)$ to which it connects as follows:
\begin{itemize}
\item {\em decision rule: }
\begin{equation}
\label{beliefalgo} \pi^k_{G}(v)=\argmin_{u\sim
v}{\left\{\dif{u}{v}{k}\right\}}.
\end{equation}
\end{itemize}
When $G=\Knn$, \cite{BSS} ensures convergence of $\pi^k_{\Knn}$ to the
optimum $\pi^*_{\Knn}$ as long as the latter is unique, which holds almost
surely if and only if $H$ is continuous.  The present paper asks about the
typical rate of such a convergence, and more precisely its dependency upon
$n$ as $n$ increases to $\infty$.

\subsection{Result.}
In order to state our main result, we introduce the normalized Hamming
distance between two given assignments $\pi,\pi'$ on a graph $G=(V,E)$ :
\begin{eqnarray*}
d(\pi,\pi')=\frac{1}{|V|}\textrm{ card}\Big\{v\in V, \pi(v)\neq \pi'(v)\Big\}.
\end{eqnarray*}
\begin{theorem}
 \label{tm:unifL1conv}
Assume the cumulative distribution function $H$ satisfies:
\begin{enumerate}
\item[A1.] Regularity : $H$ is continuous and $H'(0^+)$ exists and is non-zero;
\item[A2.] Light-tail property : as $t\to\infty$, $H(t)=1-O\left(e^{-\beta
t}\right)$\! for some $\beta>0$.
\end{enumerate}
Then,
\begin{equation*}
\label{eq:unifL1conv}
\limsup_{n\to\infty}{\mathbb
  E\Big[d\big(\pi^k_{{\cal K}_{nn}},\pi^*_{{\cal
K}_{nn}}\big)\Big]}\xrightarrow[k\to\infty]{}0.
\end{equation*}
\end{theorem}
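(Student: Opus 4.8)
The plan is to prove the theorem by the objective method: transfer the question from the finite random bipartite graph $\Knn$ to its local weak limit, the Poisson Weighted Infinite Tree (PWIT), show that the BP dynamics converge along with the graphs, and then exploit correlation decay on the limiting tree. Concretely, I would first recall (or establish) that $\Knn$ with i.i.d. edge weights converges in the local weak sense to the PWIT rooted at a typical vertex; this is the framework of Aldous and Steele that underlies Aldous's proof of the $\zeta(2)$ limit. The key point is that the finite-$k$ BP messages $\mg{u}{v}{k}$ are \emph{local} functionals: by the update rule, $\mg{u}{v}{k}$ depends only on the edge weights within graph-distance $k$ of the edge $\{u,v\}$. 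Hence for each fixed $k$ the joint law of the messages incident to a uniformly chosen vertex of $\Knn$ converges, as $n\to\infty$, to the law of the corresponding messages of a BP dynamics $\mgt{\cdot}{\cdot}{k}$ defined on the PWIT $\cal T$ with the analogous initialization and update rules. In particular the decision variable $\pi^k_{\Knn}(\text{root})$ converges in distribution to $\pit{}{k}$, the BP estimate at the root of $\cal T$.

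Next I would control the limit object $\pinn$ itself. Aldous's analysis shows that, under assumptions like A1--A2, the optimal matching $\pinn$ also has a local weak limit: there is a well-defined ``optimal matching of the PWIT'' $\pit{}{*}$ (the essentially unique minimum-cost matching on $\cal T$, described via a recursive distributional fixed-point / the functions that solve the limiting cavity equations), and the edge of $\pinn$ at a typical vertex converges in distribution to the edge of $\pit{}{*}$ at the root. Combining this with the previous paragraph, and using that both $d(\pi^k_{\Knn},\pinn)$ and its tree analogue are averages of indicator functions over vertices, one gets
\begin{equation*}
\limsup_{n\to\infty}\mathbb E\Big[d\big(\pi^k_{\Knn},\pinn\big)\Big]\;\le\;\mathbb P\Big(\pit{}{k}\neq\pit{}{*}\Big),
\end{equation*}
i.e. the finite-$n$ error is asymptotically dominated by the probability that $k$ rounds of BP on the PWIT have not yet produced the optimal matching edge at the root. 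This reduction is where the objective method does its work, and making the convergence of $\pinn$ and the interchange of limits rigorous (uniform integrability of the Hamming distance, handling the non-uniqueness/measurability issues, invoking the light-tail assumption A2 to rule out pathological long edges) is a substantial but essentially technical step.

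The heart of the proof — and the step I expect to be the main obstacle — is then to show
\begin{equation*}
\mathbb P\Big(\pit{}{k}\neq\pit{}{*}\Big)\xrightarrow[k\to\infty]{}0,
\end{equation*}
that is, \emph{correlation decay} for the BP recursion on the PWIT. The natural route is to compare the finite-depth message $\mgt{u}{v}{k}$ with the ``true'' fixed-point message $\mgt{u}{v}{\infty}$ (the solution of the limiting cavity equation that encodes $\pit{}{*}$) and prove that the discrepancy $\big|\mgt{u}{v}{k}-\mgt{u}{v}{\infty}\big|$ contracts in $k$. Because the update rule is a minimum of differences $\ewt{u}{v}-\mgt{u}{v}{k}$ over the (Poisson-many) children $u$ of $v$, one is led to a recursive stochastic inequality on the distribution of these discrepancies; the goal is to show this recursion is a contraction, for instance in a suitable Wasserstein-type metric or via a coupling argument, using assumption A1 ($H'(0^+)$ exists and is non-zero) to guarantee that the density of small-weight edges near the root is genuinely Poissonian with finite intensity, so that the ``effective branching'' of relevant edges is subcritical in the appropriate sense. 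Once the messages converge, a continuity/stability argument (the $\argmin$ at the root is stable because, by uniqueness on $\cal T$, there is an a.s. strictly positive gap between the best and second-best value of $\ewt{u}{v}-\mgt{u}{v}{\infty}$) upgrades message convergence to $\mathbb P(\pit{}{k}\neq\pit{}{*})\to 0$. The main difficulty is establishing the contraction quantitatively on an infinite random tree with unbounded degrees: one must carefully truncate the (a priori infinitely many) messages feeding into a vertex, show the tail contributions are negligible thanks to A2, and verify that the resulting finite recursion strictly contracts; this is precisely the place where the structure of the PWIT and the regularity hypotheses A1--A2 are indispensable.
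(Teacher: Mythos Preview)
Your three–step outline (local weak convergence of the BP dynamics, convergence of the optimum to $\pi^*_{\cal T}$ via Aldous, reduction to $\mathbb P(\pit{\varnothing}{k}\neq\pit{\varnothing}{*})\to 0$) is exactly the architecture the paper uses, and your use of symmetry to rewrite $\mathbb E[d(\pi^k_{\Knn},\pinn)]$ as a root probability is the right move. Two points deserve comment.

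First, a small but real issue in Step~1: the statement ``$\mg{u}{v}{k}$ depends only on edge weights within graph-distance $k$'' is vacuous on $\Knn$, whose diameter is~$2$; every message depends on all $n^2$ weights already for $k\geq 2$. Local weak convergence of BP therefore does \emph{not} follow from a generic ``finite-radius functional'' argument. What is needed (and what the paper proves as its Lemma~1) is a uniform-in-$n$ control showing that the $\argmin$ over the $n-1$ incoming contributions is, with high probability, attained among a bounded number of shortest edges; this is where A2 enters. You allude to such a truncation later, but you place it inside the PWIT contraction step rather than in the $\Knn\to{\cal T}$ step, where it is first indispensable.

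Second, and more substantively, your proposed route to correlation decay on the PWIT differs from the paper's and is, as written, not a proof. You suggest a Wasserstein-type contraction or a ``subcritical effective branching'' argument for the message errors. The recursion is \emph{anti}-monotone, the number of children is infinite, and no natural metric contraction is apparent; indeed, identifying the domain of attraction of the logistic fixed point was listed as an open problem by Aldous and Bandyopadhyay. The paper's solution is quite different: it introduces the transform $\widehat F(x)=x+\ln\!\big(F(x)/(1-F(x))\big)$, which measures the local shift of $F$ relative to the logistic $F^*$, and shows that the second iterate $T^2$ strictly contracts the oscillation $\sup\widehat F-\inf\widehat F$; an Arzel\`a--Ascoli plus uniform-integrability argument then yields uniform convergence $T^{2k}F\to\theta_\gamma F^*$. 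This gives only \emph{distributional} convergence of messages; the upgrade to sample-path convergence (and hence to $\pit{\varnothing}{k}\to\pit{\varnothing}{*}$) is obtained not by a direct contraction but by a Strassen coupling that sandwiches the $F$-initialized process between two $F^*$-initialized processes, and then invoking the bivariate uniqueness/endogeneity theorem of Aldous--Bandyopadhyay for the logistic RDE. Your sketch does not contain these ingredients, and a bare Wasserstein/coupling contraction claim would need substantial new ideas to be made rigorous here.
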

In other words, given any $\varepsilon > 0$, there exists
$k(\beps),n(\beps)$ such that the expected fraction of non-optimal
row-to-column assignments after $k(\beps)$ iterations of the BP algorithm
on a random $n\times n$ cost array is less than $\varepsilon$, no matter
how large $n\geq n(\beps)$ is. Consequently, the probability to get more
than any given fraction of errors can be made as small as desired within
finitely many iterations, independently of $n$. Since each iteration
requires $O(n^2)$ operations, the overall computation cost scales as
$O(n^2)$ only, with constant depending on the admissible error. This
applies for a wide class of cost distributions, including uniform over
$[0,1]$ or Exponential.

\begin{remark}
  It may be the case that the $\varepsilon$ fraction of wrong row-to-column
  assignments results in local violations of the matching
  property. Depending on the context of application, this might be quite
  unsatisfactory. However, such an ``$\varepsilon-$feasible matching'' can
  easily be modified in order to produce an honest matching without
  substantially increasing the total cost (see \cite[Proposition 2]{asymp}
  for details).
\end{remark}

\subsection{Organization.}

The remaining of the paper is dedicated to proving Theorem
\ref{tm:unifL1conv}. Although it is far from being an implication of the
result by Aldous \cite{zeta}, it utilizes the machinery of local
convergence, and in particular the Poisson Weighted Infinite Tree $\cal T$
appearing as the limit of $({\cal K}_{nn})_{n\geq 1}$.  These notions are
recalled in Section \ref{sec:prelim}. The diagram below illustrates the
three steps of our proof : Theorem \ref{tm:unifL1conv} corresponds to
establishing the top-horizontal arrow, which is done by establishing the
three others.
\begin{figure}[h!]
$$\xymatrix @!0 @R=2.5cm @C=2.5cm{
   \pi^k_{{\cal K}_{nn}} \ar[d]_{\tiny{\begin{array}{c} n\to\infty\\
         \textrm{Step 1 } (\textrm{Section }\ref{sec:step1})\end{array}}}  \ar@{.>}[r]^{k\to\infty}_{?}
   & \pi^*_{{\cal K}_{nn}} \ar[d]^{\tiny{\begin{array}{c} n\to\infty\\
         \textrm{Step 3 } (\textrm{Section }\ref{sec:step3})\end{array}}} \\
   \pi^k_{{\cal T}} \ar[r]_{\tiny{\begin{array}{c} k\to\infty\\
         \textrm{Step 2 }(\textrm{Section }\ref{sec:step2})\end{array}}} & \pi^*_{\cal T}
 }$$
\label{fig:proof}
\end{figure}
\vspace{-.1in}
\begin{itemize}
\item[1.] First (Section \ref{sec:step1}), we prove that BP's behavior on
  ${\cal K}_{nn}$  ``converges'' as $n\to\infty$ to its behavior on $\cal
  T$. This is formally stated as Theorem \ref{tm:continuity} and corresponds to the left vertical arrow above.
\item[2.]  Second (Section \ref{sec:step2}), we
establish convergence of BP on $\cal T$. This is summarized as Theorem
\ref{tm:strongconv} and corresponds to the bottom horizontal arrow in the
above diagram. We note that Theorem \ref{tm:weakconv}
resolves an open problem stated by Aldous and Bandyopadhyay (\cite[Open Problem \# 62]{maxtype}).
\item[3.] Third (Section
  \ref{sec:step3}), the connection between the fixed point on $\cal T$  and
  the optimal matching on ${\cal K}_{nn}$ is provided by the work by Aldous \cite{zeta} -- corresponding to the vertical right arrow and
  stated as Theorem \ref{tm:aldous2}. We use it to complete our proof.
\end{itemize}

\section{Preliminaries.}
\label{sec:prelim}

We recall here the necessary framework introduced by Aldous in \cite{zeta}.
Consider a rooted, edge-weighted and connected graph $G$, with distance
between two vertices being defined as the infimum over all paths connecting
them of the sum of edge weights along that path. For any $\varrho >0$,
define the $\varrho-${restriction} of $G$ as the subgraph $\lceil
G\rceil_{\varrho}$ induced by the vertices lying within distance $\varrho$
from the root. Call $G$ a {\em geometric graph} if $\lceil
G\rceil_{\varrho}$ is finite for every $\varrho > 0$.

\begin{definition}[local convergence]
\label{df:localconv}
Let $G, G_1,G_2,\ldots$ be geometric graphs. We say that $(G_n)_{n \geq 1}$
converges to $G$ if for every $\varrho>0$ such that no vertex in $G$ is at
distance exactly $\varrho$ from the root the following holds:

\begin{enumerate}
\item[1.] $\exists n_\varrho\in\mathbb N$ s.t. the
  $\displaystyle{\lceil{G_n}\rceil_\varrho, n\geq n_\varrho}$ are all
  isomorphic\footnote{An isomorphism from $G=(V,\varnothing,E)$ to
    $G'=(V',\varnothing',E')$, denoted $\gamma \colon G \rightleftharpoons
    G'$, is simply a bijection from $V$ to $V'$ preserving the root
    ($\gamma\big(\varnothing\big)=\varnothing'$) and the structure
    ($\forall (x,y)\in V, \{\gamma(x),\gamma(y)\} \in E' \Leftrightarrow
    \{x,y\} \in E$).} to $\lceil G \rceil_\varrho$ ;
\item[2.] The corresponding isomorphisms $\gamma^\varrho_n\colon \lceil
  G\rceil_{\varrho}\rightleftharpoons \lceil G_n\rceil_{\varrho},{n\geq
    n_\varrho}$ can be chosen so that for every edge $\{v,w\}$ in $\lceil
  G\rceil_{\varrho}$:
\begin{equation*}
  \big\|\gamma^\varrho_n(v),\gamma^\varrho_n(w)\big\|_{G_n}\xrightarrow[n\to\infty]{}\ew{v}{w}.
\end{equation*}
\end{enumerate}
In the case of labeled geometric graphs, each oriented edge $(v,w)$ is also
assigned a label $\lambda(v,w)$ taking values in some Polish space
$\Lambda$. Then the isomorphisms $(\gamma^\varrho_n)_{n\geq n_\varrho}$
have to moreover satisfy the following:
\begin{enumerate}
\item[3.]  For every oriented edge $(v,w)$ in $\lceil G \rceil_\varrho$,
  $\lambda_{G_n}\left(\gamma^\varrho_n(v),\gamma^\varrho_n(w)\right)\xrightarrow[n\to\infty]{}\lambda_G\left(v,w\right).$
\end{enumerate}
\end{definition}
The intuition behind this definition is the following: in any arbitrarily
large but fixed neighborhood of the root, $G_n$ should look very much like
$G$ for large $n$, in terms of structure (part 1), edge weights (part 2)
and labels (part 3).  With little work, one can define a distance that
metrizes this notion of convergence and makes the space of (labeled)
geometric graphs complete and separable. As a consequence, one can import
the usual machinery related to the theory of weak convergence of
probability measures. We refer the reader unfamiliar with these notions to
the excellent book of Billingsley \cite{Bil99}.

Now, consider our randomly weighted $n\times n$ bipartite graph $\Knn$ as a
random geometric graph by fixing an arbitrary root, independently of the
edge weights. Then the sequence $(\Knn)_{n\geq 1}$ happens to converge
locally in distribution to an appropriately weighted infinite random
tree. Before we formally state this result known as the ``PWIT Limit
Theorem'' \cite{zeta, maxtype}, we introduce some notations that will be
useful throughout the paper. We let $\cal V$ denote the set of all finite
words over the alphabet ${\mathbb N}^*$, $\varnothing$ the empty word,
``$\cdot$'' the concatenation operation and for any $v\in {\cal V}^*={\cal
  V}\setminus \{\varnothing\}$, $\dot{v}$ the word obtained from $v$ by
deleting the last letter. We also set ${\cal E}=\left\{\{v,v.i\},v\in{\cal
    V},i\geq 1\right\}$. The graph ${\cal T}=({\cal V}, {\cal E})$ thus
denotes an infinite tree with $\varnothing$ as root, letters as the nodes
at depth $1$, words of length $2$ as the nodes at depth $2$, etc. Now,
consider a collection $\left(\xi^v=\xi^v_1,\xi^v_2\ldots\right)_{v\in{\cal
    V}}$ of independent, ordered Poisson point processes with intensity $1$
on ${\mathbb R}^+$, and assign to edge $\{v,v.i\}\in{\cal E}$ the weight
$\ewt{v}{v.i}=\xi^{v}_i$. This defines the law of a random geometric graph
$\cal T$ called the ``Poisson Weighted Infinite Tree'' (PWIT).
\begin{theorem}[Pwit Limit Theorem, Aldous \cite{asymp,zeta}]
\label{tm:PWITlimit}  Under assumption $A1$ on $H$:
\begin{equation}
\label{eq:PWITlimit} nH'(0^+) {\cal
K}_{nn}\xrightarrow[n\to\infty]{{\cal D}}\cal T,
\end{equation}
in the sense of local weak convergence of geometric graphs.
\end{theorem}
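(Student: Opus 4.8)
The plan is to verify Definition~\ref{df:localconv} directly, by exploring the rescaled neighborhood of the root of $\Knn$ in breadth-first order and comparing it, step by step, with the analogous exploration of the PWIT. Write $c_n=nH'(0^+)$ for the scaling factor. Since the edge weights of ${\cal T}$ are a.s.\ all distinct and non-atomic, for any fixed $\varrho>0$ a.s.\ no vertex of ${\cal T}$ sits at distance exactly $\varrho$ from the root; it therefore suffices to prove that, along a cofinal sequence of such ``good'' radii $\varrho$, the random finite rooted weighted graph $\lceil c_n\Knn\rceil_\varrho$ converges in distribution to $\lceil{\cal T}\rceil_\varrho$.

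The only analytic input is the behavior of $H$ near the origin. Assumption $A1$ gives $0<H'(0^+)<\infty$ and $H(s)=H'(0^+)s+o(s)$ as $s\to0^+$, whence, for every fixed $t\geq0$,
$$n\,H\!\left(\frac{t}{c_n}\right)=n\,H\!\left(\frac{t}{nH'(0^+)}\right)\xrightarrow[n\to\infty]{}t.$$
I would first apply this at the root, which is incident to $n$ edges carrying i.i.d.\ weights with c.d.f.\ $H$: the point process $\Xi_n=\{c_nX_i:1\leq i\leq n\}$ places in each interval $[a,b]\subset[0,\varrho]$ a $\mathrm{Binomial}\big(n,\,H(b/c_n)-H(a/c_n)\big)$ number of points, with mean tending to $b-a$, so by the classical multinomial-to-independent-Poisson limit over finitely many disjoint intervals (continuity of $H$ excluding ties) $\Xi_n\cap[0,\varrho]$ converges in distribution to a rate-$1$ Poisson point process on $[0,\varrho]$ --- precisely the law of the weights of the root's children in $\lceil{\cal T}\rceil_\varrho$.

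Next I would propagate this along the exploration. Once the $O_{\mathbb P}(1)$ children of the root within scaled distance $\varrho$ have been revealed, I pick one such child $c$ at scaled distance $d_c<\varrho$; in $\Knn$ it is incident to all $n$ vertices of the opposite side, only one of which (the root) has been seen, so it has $n-1$ not-yet-revealed incident edges whose weights are i.i.d.\ with c.d.f.\ $H$ and independent of everything revealed so far, since all $n^2$ edge weights of $\Knn$ are mutually independent. The same Poisson approximation then shows that the scaled weights of these edges lying in $[0,\varrho-d_c]$ converge to a rate-$1$ Poisson process, which is exactly the offspring law in ${\cal T}$. Iterating over the exploration tree --- which is a.s.\ finite since $\mathbb E\,|\lceil{\cal T}\rceil_\varrho|=e^{\varrho}<\infty$ --- yields joint convergence of all revealed structure and weights to the corresponding data of $\lceil{\cal T}\rceil_\varrho$, \emph{provided} the exploration of $\Knn$ genuinely produces a tree.

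The cycle-freeness is the only thing that could go wrong, and it is a one-line union bound: during the (with probability $\to1$, bounded) exploration, each examined edge issues from a vertex at scaled distance $d_v<\varrho$, has scaled weight at most $\varrho-d_v\leq\varrho$, and has its far endpoint uniform among the $\Theta(n)$ vertices of the opposite side not yet joined to that vertex, so the probability it coincides with one of the $O_{\mathbb P}(1)$ already-discovered vertices is $O_{\mathbb P}(1/n)$; since only $O_{\mathbb P}(1)$ edges are examined, no collision occurs with probability $\to1$ and $\lceil c_n\Knn\rceil_\varrho$ is a tree. Combined with the offspring-weight convergence above, this gives $\lceil c_n\Knn\rceil_\varrho\Rightarrow\lceil{\cal T}\rceil_\varrho$ for every good $\varrho$, and hence \eqref{eq:PWITlimit}. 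I expect the fiddliest part to be not any single estimate but the bookkeeping that lets the ``structure'' and ``weights'' clauses of Definition~\ref{df:localconv} be established simultaneously along the exploration --- essentially Aldous's original argument.
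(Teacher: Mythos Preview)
The paper does not prove Theorem~\ref{tm:PWITlimit}; it is stated with attribution to Aldous \cite{asymp,zeta} and used as a black box, so there is no in-paper proof to compare against. Your sketch is essentially the standard Aldous argument (breadth-first exploration, binomial-to-Poisson convergence of rescaled edge lengths via $nH(t/c_n)\to t$, and a union bound showing that the explored subgraph is a tree with high probability), and it is correct in its main lines---indeed you yourself note this at the end. The computation $\mathbb E\,|\lceil{\cal T}\rceil_\varrho|=e^{\varrho}$ is right, and your handling of the independence of not-yet-explored edges is exactly what makes the inductive step go through.
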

\begin{remark}
  To get rid of scaling factors, we will henceforth multiply all edge
  weights in ${\cal K}_{nn}$ by $nH'(0^+)$. Observe that both the optimal
  matching $\pi^*_{{\cal K}_{nn}}$ and BP estimates $\pi^k_{{\cal K}_{nn}},
  {k\geq 0}$ remain unaffected.
\end{remark}
\section{First step: convergence to a limiting dynamic as $n\to\infty$.}\label{sec:step1}
In this section we deduce from the PWIT Limit Theorem that the behavior of
BP when running on ${\cal K}_{nn}$ ``converges'' as $n\to\infty$ to its
behavior when running on $\cal T$. To turn this idea into a rigorous
statement, let us encode the execution of BP as labels attached to the
oriented edges of the graph. Specifically, given a geometric graph $G$ and
an integer $k\geq 0$, we define the $k^{th}-$step configuration of BP on
$G$, denoted by $(G,\mg{\cdot}{\cdot}{k},\pi^k_G )$, as the labeled
geometric graph obtained by setting the label of any oriented edge $(v,w)$
in $G$ to be the couple $(\mg{v}{w}{k},\mathbf{1}_{\{w=\pig{v}{k}\}})$. We
can now state and prove the main theorem of the present section.
\begin{theorem}[Continuity of BP]
\label{tm:continuity} Consider an almost sure realization of the PWIT limit
Theorem: 
\begin{equation}
\label{eq:pwitlimitas}
\Knn\xrightarrow[n\to\infty]{a.s.}{\cal T}.
\end{equation}
Then for every fixed $k\geq 0$, the $k^{th}-$step configuration of BP on
$\Knn$ converges locally in probability to the $k^{th}-$step configuration
of BP on $\cal T$ :
\begin{eqnarray}
\label{eq:continuity1}
\left(\Knn,\mgn{\cdot}{\cdot}{k},\pi^k_{{\cal
      K}_{nn}}\right)
&\xrightarrow[n\to\infty]{{P}}&
\left({\cal T},\mgt{\cdot}{\cdot}{k},\pi^k_{\cal T}\right).
\end{eqnarray}
\end{theorem}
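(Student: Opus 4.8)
The plan is to induct on $k$. The base case $k=0$ is immediate: by the initialization rule \eqref{bp0}, every message $\mgn{v}{w}{0}$ and $\mgt{v}{w}{0}$ equals $0$, so the only labels that vary are the decision indicators $\mathbf{1}_{\{w=\pig{v}{0}\}}$, which depend solely on the edge weights incident to $v$ (one compares $\ew{u}{v}$ over $u\sim v$). Hence the $0^{th}$-step configuration is a deterministic continuous functional of the underlying weighted graph restricted to a depth-one neighborhood of the root, and local weak convergence of $\Knn\to{\cal T}$ (Theorem~\ref{tm:PWITlimit}, in its almost-sure realization \eqref{eq:pwitlimitas}) transfers directly. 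The one subtlety already at this stage, which will recur, is that the \emph{argmin} in \eqref{beliefalgo} is a discontinuous functional when two candidates tie; but under assumption A1 the limiting weights on ${\cal T}$ are almost surely all distinct (they come from Poisson point processes with no atoms and independent across words), so the argmin is almost surely locally constant and the labels converge.

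For the inductive step, suppose the $k^{th}$-step configuration on $\Knn$ converges locally in probability to that on ${\cal T}$. I want to express the $(k+1)^{th}$-step configuration as a continuous functional (defined on a full-measure set of configurations) of the $k^{th}$-step configuration. The update rule $\mgn{v}{w}{k+1}=\min_{u\sim v,\,u\neq w}\{\difn{u}{v}{k}\}$ reads off, for each oriented edge $(v,w)$, the minimum over the other neighbors $u$ of $v$ of a quantity built from the weight $\ew{u}{v}$ and the incoming message $\mg{u}{v}{k}$ — both of which are labels already present in the $k^{th}$-step configuration in the depth-two neighborhood of $(v,w)$. The decision label at $v$ is likewise the argmin of the same family. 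So $(G,\mg{\cdot}{\cdot}{k+1},\pi^{k+1}_G)$ is obtained from $(G,\mg{\cdot}{\cdot}{k},\pi^k_G)$ by a \emph{local}, deterministic map: the label at an edge in $\lceil G\rceil_\varrho$ is determined by the labels and weights in $\lceil G\rceil_{\varrho+c}$ for a fixed radius increment $c$. The general principle that a local continuous map pushes local weak convergence forward then gives \eqref{eq:continuity1} at level $k+1$, again modulo the almost-sure absence of ties ensuring the argmin is continuous.

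The main obstacle is precisely the transition from \emph{finite} to \emph{infinite} degree: in ${\cal K}_{nn}$ every vertex has $n$ neighbors, so the update rule is a min over $n$ terms, whereas on ${\cal T}$ it is a min over countably many terms. Local weak convergence controls only a bounded-radius neighborhood, and a priori the minimizing neighbor $u$ of $v$ could be one whose edge weight $\ew{u}{v}$ is large — hence outside any fixed restriction — and yet whose incoming message $\mg{u}{v}{k}$ is very negative. I must therefore show that, with high probability, the argmin and the min value in the update rule are \emph{already attained within a bounded-radius restriction}, uniformly in $n$. This requires two ingredients working together: the light-tail assumption A2, which controls how fast weights grow as one moves away from the root (equivalently, that the number of neighbors of $v$ with weight below $r$ is tight), and a uniform-in-$n$ lower bound on the messages $\mg{u}{v}{k}$ — one needs that BP messages cannot be arbitrarily negative. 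The latter should follow by a separate induction: $\mg{v}{w}{k+1}\geq \min_u \ew{u}{v}-\max_u(\text{previous message along }u)$, and one tracks a deterministic-type lower bound $-L_k$ that depends on $k$ but not on $n$, using that the smallest incident weight is $\Omega(1)$ in the rescaled model and that messages one step earlier are bounded below. Combining the tail control of weights with the message lower bound, a standard truncation argument shows the min is localized, and then the finite-radius continuity argument of the previous paragraph applies. I would isolate this localization estimate as a lemma, prove it by the coupled induction sketched above, and then feed it into the inductive continuity step.
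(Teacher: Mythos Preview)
Your overall architecture matches the paper's proof exactly: induction on $k$, identification of the discontinuity of the update/decision rules caused by the unbounded number of terms in the $\min/\argmin$, and resolution via a localization lemma showing that with high probability (uniformly in $n$) only finitely many neighbors matter. The paper indeed isolates this as Lemma~\ref{lm:unifctrl}, proved via two auxiliary estimates: a uniform control on edge weights (Lemma~\ref{lm:domweights}, using A1 and A2) and a uniform-in-$n$ exponential tail bound on the messages at each fixed level $k$ (Lemma~\ref{lm:unifdom}).

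There is, however, a genuine gap in your sketch of the message bound. You propose to control $\mgn{v}{w}{k+1}$ from below via
\[
\mgn{v}{w}{k+1}\ \geq\ \min_{u}\ewn{u}{v}\ -\ \max_{u}\mgn{u}{v}{k},
\]
and then track a uniform-in-$n$ lower bound $-L_k$. This inequality is too crude: the $\max$ on the right is taken over $n-1$ messages, and even if each individual message has (by induction) an exponential upper tail independent of $n$, the maximum of $n-1$ such variables is of order $\log n$. Your bound therefore degrades with $n$ and cannot yield a uniform $-L_k$. (Also note the alternation: a \emph{lower} bound at step $k+1$ requires an \emph{upper} tail on messages at step $k$, not a lower one.)

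The paper avoids this by never passing to the global $\max$. Instead, for the lower tail one writes
\[
\mathbb P\big(\mgn{v}{\dot v}{k+1}\leq -t\big)\ \leq\ \sum_{i\geq 1}\Big[\mathbb P\big(\ewn{v}{v.i}\leq r_i(t)\big)+\mathbb P\big(\mgn{v.i}{v}{k}\geq t+r_i(t)\big)\Big],
\]
and chooses thresholds $r_i(t)$ growing linearly in $i$ (specifically $r_i(t)=\delta i e^{-\gamma t}$) so that the first sum is controlled by the Poisson-like tail of the $i^{\text{th}}$ order statistic and the second by the inductive exponential upper tail on messages. This term-by-term balancing, exploiting that the $i^{\text{th}}$ shortest edge has length of order $i$, is the missing idea in your sketch; once you have it, the rest of your outline goes through exactly as in the paper.
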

\begin{proof}
  Let us (redundantly) re-label the vertices of $\Knn$ by words of $\cal V$
  in a manner that yields to consistent comparison between the messages on
  $\Knn$ and those on $\cal T$. To begin with, let the empty word
  $\varnothing$ represent the root of $\Knn$ and words $1,2,\cdots,n$ its
  immediate neighbors, ordered by increasing weight of the edge connecting
  them to the root. Then, inductively, if word $v\in \cal V^*$ represents
  some vertex $x\in \Knn$ and $\dot{v}$ some $y\in \Knn$, then let the
  words $v.1,v.2,\cdots,v.(n-1)$ represent the $n-1$ neighbors of $x$
  distinct from $y$ in $\Knn$, again ordered by increasing weight of the
  corresponding edge. Note that this definition makes almost surely sense
  since the edge weights are pairwise distinct (by continuity of $H$). In
  fact, it follows from an easy induction on $v\in\cal V$ that the vertex
  represented by $v$ in $\Knn$ is nothing but $\gamma_n^\varrho(v)$ as soon
  as $\varrho$ and $n$ are large enough, where
  $\gamma_n^\varrho\colon\lceil{\cal T}\rceil_\varrho\rightleftharpoons
  \lceil\Knn\rceil_\varrho$ is the (random) isomorphism involved in the
  definition of the local convergence (\ref{eq:pwitlimitas}). In
  particular,
\begin{equation}
\forall \{v,w\}\in{\cal E},\,
\ewn{v}{w}\xrightarrow[n\to\infty]{a.s.}\ewt{v}{w}.
\end{equation}
With this relabeling in hand, the desired convergence
(\ref{eq:continuity1}) can now be written:
\begin{equation}
\label{eq:continuity2} \forall \{v,w\}\in {\cal E},
\mgn{v}{w}{k}\xrightarrow[n \to \infty]{P}\mgt{v}{w}{k}\textrm{ and }
\forall v \in {\cal V}, \pin{v}{k}\xrightarrow[n \to \infty]{P}\pit{v}{k}
.
\end{equation}
The recursive nature of the messages almost compels one to think of proving
(\ref{eq:continuity2}) by induction over $k$. The base case of $k=0$ is
trivial. However, when trying to go from step $k$ to step $k+1$ one soon
gets confronted by a major hinder: the update and decision rules
(\ref{bp0}) and (\ref{beliefalgo}) are not continuous with respect to local
convergence. Indeed, writing:
\begin{eqnarray*}
\label{eq:inf1} 
\mgn{v}{w}{k+1}&=&\min_{\tiny{\begin{array}{c}u\in\{v.1,\ldots,v.(n-1),\dot{v}\}\\u\neq
  w\end{array}}}\Big\{\difn{u}{v}{k}\Big\}\\
\textrm{and }\pin{v}{k}&=&\argmin_{u\in\{v.1,\ldots,v.(n-1),\dot{v}\}}\Big\{\difn{u}{v}{k}\Big\},
\end{eqnarray*}
one can not simply invoke convergence of each term inside the $\min$ and
$\argmin$ to conclude, because there are unboundedly many such terms as
$n\to \infty$. Remarkably enough, it turns out that under assumption A2, we
can in fact restrict ourselves to a uniformly bounded number of them with
probability as high as desired, as stated in the following lemma.
\end{proof}

\begin{lemma}[Uniform control on essential messages]
\label{lm:unifctrl}
For all $v\in{\cal V}$ and $k\geq 0$ :
\begin{equation*}
\limsup_{n\to\infty}{\mathbb
P\left(\argmin_{1\leq i< n}{\Big\{\difn{v}{v.i}{k}\Big\}} \geq
i_0\right)} \xrightarrow[i_0\to\infty]{}0.
\end{equation*}
\end{lemma}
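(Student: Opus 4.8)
The plan is to first reduce the statement to a summable bound over "far" neighbours, and then to control the messages coming out of those neighbours. Let $i^\star$ denote the argmin in the lemma. If $i^\star\geq i_0$, then comparing the index $i^\star$ with the index $1$ in the minimisation gives $\difn{v.i^\star}{v}{k}\leq\difn{v.1}{v}{k}$, i.e.
\begin{equation*}
\mgn{v.i^\star}{v}{k}\ \geq\ \ewn{v}{v.i^\star}-\ewn{v}{v.1}+\mgn{v.1}{v}{k}.
\end{equation*}
Hence $\big\{\argmin_{1\leq i<n}\{\difn{v.i}{v}{k}\}\geq i_0\big\}\subseteq\bigcup_{i_0\leq j<n}B_j$, where $B_j:=\big\{\mgn{v.j}{v}{k}\geq\ewn{v}{v.j}-\ewn{v}{v.1}+\mgn{v.1}{v}{k}\big\}$, and it suffices to show that $\sum_{j\geq i_0}\mathbb P(B_j)\to 0$ as $i_0\to\infty$, uniformly in $n$. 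The intuition is that a far neighbour $v.j$ is joined to $v$ by a heavy edge (under A1, $\ewn{v}{v.j}$ is of order $j$), whereas its outgoing message stays of order $1$, so $B_j$ has a probability summable in $j$.

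The quantitative form of "the outgoing message stays of order $1$" is a pair of a~priori estimates, established together by induction on $k$ and \emph{uniformly in $n$}: there are non-negative random variables $Z^{(k)}_u$ (with $Z^{(0)}=Z^{(1)}\equiv0$) such that, on $\Knn$,
\begin{equation*}
-\,Z^{(k)}_{u}\ \leq\ \mgn{u}{\dot u}{k}\ \leq\ \ewn{u}{u.1}+Z^{(k-1)}_{u.1},
\end{equation*}
with a uniform tail bound $\sup_n\sup_u\mathbb P(Z^{(k)}_u>t)\leq A_ke^{-a_kt}$. Both inequalities come out of the update rule $\mgn{u}{\dot u}{k}=\min_{i\geq1}\{\ewn{u}{u.i}-\mgn{u.i}{u}{k-1}\}$, the level-$(k{-}1)$ lower bound producing the level-$k$ upper bound and the level-$(k{-}1)$ upper bound producing the level-$k$ lower bound, so that $Z^{(k)}_u$ may be taken of the form $\max_{i\geq1}\big[\ewn{u.i}{u.i.1}+Z^{(k-2)}_{u.i.1}-\ewn{u}{u.i}\big]^{+}$. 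The tail is then propagated through the union bound $\mathbb P(Z^{(k)}_u>t)\leq\sum_i\mathbb P\big(\ewn{u.i}{u.i.1}+Z^{(k-2)}_{u.i.1}>\ewn{u}{u.i}+t\big)$: on the one hand the discount "$-\,\ewn{u}{u.i}$" makes this sum converge, via the Campbell-type identity $\sup_n\mathbb E\big[\sum_i e^{-c\,\ewn{u}{u.i}}\big]<\infty$ for all $c>0$ (valid under A1, since $\ewn{u}{u.i}$ is essentially the $i$-th order statistic of $n$ i.i.d.\ $H$-variables); on the other hand the light-tail hypothesis A2 is invoked to keep the contributions of the extreme edge weights of $\Knn$ uniformly small in $n$.

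Granting these bounds, on $B_j$ one gets $\ewn{v}{v.j}\leq\ewn{v}{v.1}+\ewn{v.j}{v.j.1}+Z^{(k-1)}_{v.j.1}+Z^{(k)}_{v.1}=:S_j$, a sum of four non-negative random variables with uniformly light tails, so for a small fixed $\alpha>0$,
\begin{equation*}
\mathbb P(B_j)\ \leq\ \mathbb P\big(\ewn{v}{v.j}\leq\alpha j\big)+\mathbb P\big(S_j\geq\alpha j\big).
\end{equation*}
The first probability equals $\mathbb P\big(\mathrm{Bin}(n,H(\alpha j/(nH'(0^+))))\geq j\big)$, which is at most $e^{-c_1 j}+e^{-c_2 n}$ for $\alpha$ small by A1 and Chernoff's inequality, and the second is at most $A_k'e^{-a_k'j}+o_n(1)$ by the uniform tails of the summands. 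Summing over $j\geq i_0$ yields $\mathbb P\big(\argmin_{1\leq i<n}\{\difn{v.i}{v}{k}\}\geq i_0\big)\leq\Psi_k(i_0)+o_n(1)$ with $\Psi_k(i_0)\to0$ as $i_0\to\infty$; letting first $n\to\infty$ and then $i_0\to\infty$ finishes the proof. The argument is unchanged for a general $v\in\mathcal V$, the constants depending only on $k$.

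The step I expect to be the real obstacle is the uniform-in-$n$ tail control of $Z^{(k)}_u$. On the PWIT $\mathcal T$ the quantities attached to distinct children are functions of disjoint subtrees, hence independent, which makes the induction transparent; on $\Knn$, however, the recursion defining $\mgn{\cdot}{\cdot}{k}$ unfolds over graph walks of length $k$ rather than over a genuine tree, and already for $k\geq4$ a message located below $v.j$ may depend on the edge weight $\ewn{v}{v.j}$ itself. One must therefore argue with \emph{marginal} tail estimates together with the Campbell-type bound above, rather than with full independence, and keep the exponential rates $a_k$ and the constants $A_k$ under control as $k$ grows; the light-tail assumption A2 is precisely what makes this bookkeeping close uniformly in $n$. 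The naive deterministic bound $|\mgn{v.j}{v}{k}|\leq k\cdot(\text{largest weight in the $k$-ball around }v.j)$ is of no use here, since the heavy edge $\ewn{v}{v.j}$ itself lies in that ball — which is exactly why one needs the damped recursive estimate above.
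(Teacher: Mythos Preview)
Your outline matches the paper's: a union bound over far neighbours $j\geq i_0$, an order-statistic lower bound on $\ewn{v}{v.j}$ (the paper's Lemma~\ref{lm:domweights}), and a two-sided exponential tail on the messages proved by induction on $k$ (your $Z^{(k)}$ recursion is the content of the paper's Lemma~\ref{lm:unifdom}; the paper compares to the global minimum $\mgn{v}{\dot v}{k+1}$ rather than to index~$1$, a cosmetic difference). One small correction: the Binomial identity you write for $\mathbb P(\ewn{v}{v.j}\leq\alpha j)$ is exact only when $v$ is the root. For a general word $v$ the relabelling path may revisit vertices of $\Knn$, and the paper obtains the bound $M_{|v|}\frac{(\alpha t)^i}{i!}e^{\alpha t}$ by summing over cycle-free paths of length at most $|v|$; in particular all constants must be allowed to depend on $|v|$ as well as on $k$ (harmless here since $v$ is fixed).

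The substantive gap is the one you anticipate, and the Campbell-type identity does not close it. Writing $W_i=\ewn{u.i}{u.i.1}+Z^{(k-2)}_{u.i.1}$, the step $\mathbb P(W_i>\ewn{u}{u.i}+t)\leq M e^{-\beta t}\,\mathbb E\big[e^{-\beta\,\ewn{u}{u.i}}\big]$ requires $W_i$ to be independent of $\ewn{u}{u.i}$, which fails on $\Knn$ exactly as you note. With marginal tails only, one is forced to split at a threshold $r_i$, and if $r_i$ is chosen independent of $t$ (say $r_i=\delta i$) the edge-weight piece $\sum_i\mathbb P(\ewn{u}{u.i}<r_i)$ is a positive constant in $t$, so the resulting bound on $\mathbb P(Z^{(k)}_u>t)$ does not even tend to $0$. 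The paper's remedy is a \emph{$t$-dependent} threshold $r_i(t)=\delta i\,e^{-\gamma t}$ with $\gamma$ strictly smaller than the inductive rate: the edge-weight sum then picks up a factor $e^{-\gamma t i}$ in the $i$-th term and decays like $e^{-\gamma t}$ after summation, while the message sum still decays exponentially in $t$. This $t$-dependent splitting is the missing technical idea; once it is inserted, your argument and the paper's coincide.
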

The proof of this Lemma is long and technical and hence is defered to Appendix \ref{app1}
\section{Second step: analysis of BP on PWIT.}\label{sec:step2}
In light of Theorem \ref{tm:continuity}, one can replace the asymptotic
analysis of BP on $\Knn$ as $n$ becomes large by the direct study of BP's
dynamics on the limiting PWIT. Formally, we are interested in the limiting
behavior of the random process defined for all $v\in {\cal V}^*$ by the
recursion:
\begin{equation} 
\label{eq:recas}\mgt{v}{\dot{v}}{k+1} = \min_{i\geq
1}{\left\{\dift{v}{v.i}{k}\right\}},
\end{equation}
where the initial values $\left(\langle v\to\dot{v}\rangle^0_{\cal
    T}\right)_{v\in{\cal V}^*}$ are i.i.d. random variables independent of
$\cal T$ ($0$ in the case of our algorithm). The fact that the above $\min$
is a.s. well defined despite the infinite number of terms will become clear
later (see Lemma \ref{lm:welldefiniteness}). For the time being, it is
sufficient to consider it as a $\overline{\mathbb R}$-valued infimum. First
observe that at any given time $k$ all $\langle v\to\dot{v}\rangle^k_{\cal
  T}, v\in {\cal V}^*$ share the same distribution, owing to the natural
spatial invariance of the PWIT.  Moreover, if $F$ denotes the corresponding
tail distribution function at a given time, a straightforward computation
(see for instance \cite{zeta}) shows that the tail distribution function
$TF$ obtained after a single application of update rule (\ref{eq:recas}) is
given by:
\begin{equation*}
\label{eq:recweak} TF:x\mapsto
\exp{\left(-\int_{-x}^{+\infty}{F(t)\,dt}\right)}.
\end{equation*}
This defines an operator $T$ on the space $\cal D$ of tail distribution
functions of $\overline{\mathbb R}-$valued random variables, i.e.
non-increasing corlol\footnote{\textbf{c}ontinuous \textbf{o}n the
  \textbf{r}ight, \textbf{l}imit \textbf{o}n the \textbf{l}eft} functions
$F \colon {\mathbb R} \to [0,1]$. $T$ is known to have a unique fixed point
(see \cite{zeta}), the so-called \textit{logistic} distribution:
\begin{equation*}
F^*:x \mapsto \frac{1}{1+e^x}.
\end{equation*}
Our first step will naturally consist in studying the dynamics of $T$ on
$\cal D$.
\subsection{Weak attractiveness.}

Finding the domain of attraction of $F^*$ under operator $T$ is not known
and has been listed as an open problem by Aldous and Bandyopadhyay
(\cite[Open Problem \# 62]{maxtype}). In what follows, we answer this
question and more. We fully characterize the asymptotic behavior of the
successive iterates $(T^kF)_{k\geq 0}$ for any initial distribution
$F\in\cal D$.

First observe that $T$ is anti-monotone with respect to pointwise order:
\begin{equation}
\label{decr} F_1\leq F_2 \Longrightarrow TF_1 \geq TF_2.
\end{equation}
This suggests considering the non-decreasing second iterate $T^2$. Unlike
$T$, $T^2$ admits infinitely many fixed points. To see this, let $\theta_t$
($t\in\mathbb R$) be the $t-$shift operator defined on $\cal D$ by
$\theta_tF\colon x\mapsto F(x-t)$.  Then a trivial change of variable
gives:
\begin{equation}
\label{tshift} T\circ\theta_t = \theta_{-t}\circ T.
\end{equation}
Therefore, it follows that $T^2(\theta_tF^*)=\theta_t(T^2F^*)=\theta_tF^*$ for all
$t\in\mathbb R$. That is, the $\theta_t F^*, t\in\mathbb R$ are fixed points of $T^2$. These considerations lead us to introduce the key tool of our analysis:
\begin{definition}
For $F\in {\cal D}$, define the transform $\widehat F$ as follows :
$$\forall x\in\mathbb R, \widehat F(x) = x+\ln\left(\frac{F(x)}{1-F(x)}\right).$$
\end{definition}
Intuitively, ${\widehat F}$ represents the local shift (along the X-axis)
between $F$ and $F^*$.  Indeed, it enables us to express any $F\in\cal D$
as a locally deformed version of $F^*$ via the following straightforward
inversion formula: $$\forall x, F(x)=F^*(x-{\widehat
  F}(x))=\theta_{{\widehat F}(x)}F^*(x).$$ In particular,
$\theta_{t_1}F^*\leq F\leq \theta_{t_2}F^*\Longleftrightarrow t_1\leq
\widehat{F}\leq t_2$, and $F=\theta_tF^*$ if and only if $\widehat{F}$ is
constant on $\mathbb R$ with value $t$. In that sense, the maximal
amplitude of the variations of $\widehat F$ on $ \mathbb R$ tells something
about the distance between $F$ and the family of fixed points
$\{\theta_tF^*,{t\in\mathbb R}\}$. Thus, the action of $T$ on those
variations appears to be of crucial importance and will be at the center of
our attention for the rest of this sub-section. We now state three lemmas
whose proofs are given in Appendix \ref{app2}.
\begin{lemma}
\label{lm:boundedness} Let $F\in{\cal D}\setminus\{0\}$ such that $\displaystyle{\int_{0}^{+\infty}F<\infty}$. Then, $\widehat {T^4F}$ is bounded on $\mathbb R$.
\end{lemma}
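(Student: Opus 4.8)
The plan is to first reformulate the conclusion and then push it through the four iterates. By the equivalence $\theta_{t_1}F^{*}\le F\le\theta_{t_2}F^{*}\Leftrightarrow t_1\le\widehat F\le t_2$ recorded above, $\widehat{T^{4}F}$ is bounded on $\mathbb R$ exactly when $T^{4}F$ is squeezed between two shifts of the logistic curve $F^{*}$. Since for any $G\in{\cal D}$ the map $x\mapsto TG(x)=\exp\bigl(-\int_{-x}^{+\infty}G\bigr)$ is continuous, and is $(0,1)$-valued as soon as $\int_{0}^{+\infty}G<\infty$, the transform $\widehat{T^{4}F}$ is automatically finite and continuous, hence bounded on compacts; so it suffices to prove that $T^{4}F$ has \emph{logistic-order tails}, i.e.\ $T^{4}F(x)$ lies between two positive multiples of $e^{-x}$ as $x\to+\infty$, while $1-T^{4}F(x)$ lies between two positive multiples of $e^{x}$ as $x\to-\infty$.

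For this I would track, along the iterates $F_{j}:=T^{j}F$ for $j=1,\dots,4$, the speed at which $F_{j}(x)\to0$ at $+\infty$ and $F_{j}(x)\to1$ at $-\infty$, using only the formula $TG(x)=\exp(-\int_{-x}^{+\infty}G)$, the identity $\int_{-x}^{+\infty}G=\int_{0}^{+\infty}G+x-\int_{0}^{x}\bigl(1-G(-s)\bigr)\,ds$ valid for $x\ge0$ (substitute $t\mapsto-s$), the elementary bounds $u/2\le1-e^{-u}\le u$ for $0\le u\le1$ that govern $1-TG$ near $-\infty$ (where $\int_{-x}^{+\infty}G\to0$ because $\int_{0}^{+\infty}G<\infty$), and the monotonicity of $G$. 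The propagation then goes: \textbf{(i)} $F\neq0$ gives $\sup F=F(-\infty)=:\eta>0$, so $\int_{-x}^{+\infty}F\ge(\eta/2)x$ for $x$ large, whence $F_{1}$ decays at least exponentially at $+\infty$, $\int_{0}^{+\infty}F_{1}<\infty$, and $F_{1}(0)=e^{-\int_{0}^{+\infty}F}>0$; \textbf{(ii)} consequently $\int_{-x}^{+\infty}F_{1}\ge F_{1}(0)\,x$ for $x\ge0$, so $F_{2}$ also decays at least exponentially at $+\infty$ with $\int_{0}^{+\infty}F_{2}<\infty$; \textbf{(iii)} then $1-F_{2}(-s)\le\int_{s}^{+\infty}F_{1}$ is exponentially small in $s$ by (i), so $c:=\int_{0}^{+\infty}\bigl(1-F_{2}(-s)\bigr)ds<\infty$, and the identity above yields $\int_{0}^{+\infty}F_{2}+x-c\le\int_{-x}^{+\infty}F_{2}\le\int_{0}^{+\infty}F_{2}+x$ for $x\ge0$, i.e.\ $F_{3}$ has a logistic-order tail at $+\infty$; \textbf{(iv)} rerunning (iii) with $(F_{1},F_{2})$ replaced by $(F_{2},F_{3})$ and invoking the exponential decay of $F_{2}$ from (ii) gives that $F_{4}$ has a logistic-order tail at $+\infty$; \textbf{(v)} for $x\to-\infty$, $1-F_{4}(x)\asymp\int_{|x|}^{+\infty}F_{3}\asymp e^{-|x|}=e^{x}$ by the two-sided estimate from (iii), so $F_{4}$ has a logistic-order tail at $-\infty$. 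Then (iv) and (v) together with the reformulation finish the proof.

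The delicate point — and the reason the statement involves $T^{4}$ and not $T^{2}$ or $T^{3}$ — is that $\int_{0}^{+\infty}F<\infty$ does not force $\int_{0}^{+\infty}tF(t)\,dt<\infty$, so the crude bound $1-TF(x)\le\int_{-x}^{+\infty}F$ need not be integrable near $-\infty$, and $\widehat{TF}$, $\widehat{T^{2}F}$ can genuinely be unbounded (a polynomially decaying $F$ produces a sub-exponential correction to the $e^{-x}$ tail of $F_{2}$). One has to first spend one iteration to acquire true exponential decay at $+\infty$, which is precisely what makes $\int_{0}^{+\infty}\bigl(1-F_{j}(-s)\bigr)ds$ converge in steps (iii)--(v) and forces logistic-order behaviour — but only from the third iterate at $+\infty$ and the fourth at $-\infty$. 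I expect step (iii) — upgrading ``exponentially small at $+\infty$'' to the \emph{two-sided} estimate $F_{3}(x)\asymp e^{-x}$ — to be the main technical hurdle; step (iv) repeats it, (v) is immediate, and (i)--(ii) are routine.
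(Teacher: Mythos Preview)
Your proposal is correct and follows essentially the same route as the paper: reformulate boundedness of $\widehat{T^4F}$ as the two-sided tail estimates $T^4F(x)=\Theta(e^{-x})$ at $+\infty$ and $1-T^4F(x)=\Theta(e^{x})$ at $-\infty$, then propagate tail information through the four iterates of $T$. The paper packages the propagation via the two observations $TF(x)=\Theta\bigl(e^{-\int_{-x}^{x_0}F}\bigr)$ at $+\infty$ and $TF(x)=1-\Theta\bigl(\int_{-x}^{+\infty}F\bigr)$ at $-\infty$, which is exactly your identity $\int_{-x}^{+\infty}G=\int_{0}^{+\infty}G+x-\int_{0}^{x}(1-G(-s))\,ds$ together with $1-e^{-u}\asymp u$; your steps (i)--(v) and the paper's chain $TF\to T^2F\to T^3F\to T^4F$ are the same computation with slightly different bookkeeping, and your diagnosis of why four iterations are needed matches the paper's.
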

\begin{lemma}
\label{lm:maxamplitude} If $F\in \cal D$ is such that $\widehat{F}$ is
bounded, then $\widehat{TF}$ is bounded too, and moreover:
$$ -\sup_{\mathbb
R}{\widehat{F}}\leq \inf_{\mathbb R}{\widehat{TF}}  \leq \sup_{\mathbb
R}{\widehat{TF}}  \leq  -\inf_{\mathbb R}{\widehat{F}}.$$
Further, if $\widehat{F}$ is not constant then this contraction becomes strict under a second iteration : 
$$ \inf_{\mathbb R}{\widehat{F}}<\inf_{\mathbb R}{\widehat{T^2F}}\leq\sup_{\mathbb R}{\widehat{T^2F}}  <  \sup_{\mathbb
R}{\widehat{F}}.$$

\end{lemma}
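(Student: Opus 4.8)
The plan is to establish the two displayed chains separately: the first by a soft monotonicity argument, the second by a quantitative analysis of the tails of the iterates. For the first chain, write $i=\inf_{\mathbb R}\widehat F$ and $s=\sup_{\mathbb R}\widehat F$. The inversion formula $F(x)=\theta_{\widehat F(x)}F^*(x)$ and the strict monotonicity of $F^*$ give $\theta_iF^*\le F\le\theta_sF^*$; applying $T$ and combining anti-monotonicity~(\ref{decr}) with the commutation relation~(\ref{tshift}) yields $\theta_{-s}F^*=T(\theta_sF^*)\le TF\le T(\theta_iF^*)=\theta_{-i}F^*$, i.e.\ $-s\le\widehat{TF}\le-i$ pointwise, so $\widehat{TF}$ is bounded. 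This is exactly the first chain.

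For the second chain, assume $\widehat F$ is not constant and set $a=\inf_{\mathbb R}\widehat F<b=\sup_{\mathbb R}\widehat F$; I will show $\inf_{\mathbb R}\widehat{T^2F}>a$, the inequality $\sup_{\mathbb R}\widehat{T^2F}<b$ being entirely symmetric. The main tool is the explicit identity $\widehat{TG}(x)=x-\ln\!\bigl(e^{\int_{-x}^{+\infty}G}-1\bigr)$, valid for every $G\in\cal D$ (immediate from $TG(x)=e^{-\int_{-x}^{+\infty}G}$), together with $\int_{-x}^{+\infty}\theta_tF^*=\ln(1+e^{x+t})$. Three soft facts come first. (i) $T$ is injective on $\cal D$: if $TF_1=TF_2$ then $\int_{-x}^{+\infty}F_1=\int_{-x}^{+\infty}F_2$ for all $x$, hence $F_1=F_2$ a.e.\ and so everywhere, both being corlol; consequently $\widehat{TF}$ and $\widehat{T^2F}$ are again non-constant, for otherwise (using (\ref{tshift}) and $TF^*=F^*$) $\widehat F$ would be. (ii) By the first chain applied twice $\widehat{T^2F}\ge a$ on $\mathbb R$, and $\widehat{T^2F}$ is continuous because $x\mapsto\int_{-x}^{+\infty}TF$ is. (iii) Using $F\ge\theta_aF^*$ (from $\widehat F\ge a$), the identity above gives $\widehat{TF}(s)=-a$ if and only if $F=\theta_aF^*$ a.e.\ on $[-s,+\infty)$; since $F\ne\theta_aF^*$ this rules out $\widehat{T^2F}$ ever attaining the value $a$ at a finite point, because $\widehat{T^2F}(x_0)=a$ would force (the same computation one level up) $TF=\theta_{-a}F^*$ on $[-x_0,+\infty)$, hence $\widehat{TF}(s)=-a$ and then $F=\theta_aF^*$ a.e.\ on $[-s,+\infty)$ for every $s\ge-x_0$, and letting $s\to+\infty$ forces $\widehat F\equiv a$.

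The crux is the tail behaviour of $\widehat{T^2F}$. Writing $\int_{-s}^{+\infty}F=\ln(1+e^{s+a})+\eta(s)$ with $\eta(s)=\int_{-s}^{+\infty}(F-\theta_aF^*)\uparrow\varepsilon:=\int_{\mathbb R}(F-\theta_aF^*)$, an elementary expansion of the identity for $\widehat{TF}$ gives $\widehat{TF}(s)=-a-\eta(s)+o(1)$ as $s\to+\infty$; non-constancy of $\widehat F$ yields $0<\varepsilon\le b-a$, so $\widehat{TF}(s)\le-a-\tfrac{\varepsilon}{2}$ for all large $s$. Feeding this back through $TF(t)=F^*\!\bigl(t-\widehat{TF}(t)\bigr)\le F^*\!\bigl(t+a+\tfrac{\varepsilon}{2}\bigr)$ and integrating gives $\int_{-x}^{+\infty}TF\le e^{x-a-\varepsilon/2}$ for $x$ very negative; since $\ln(e^u-1)=\ln u+o(1)$ as $u\downarrow0$, the identity for $\widehat{T^2F}$ then forces $\widehat{T^2F}(x)\ge a+\tfrac{\varepsilon}{2}-o(1)$, so $\liminf_{x\to-\infty}\widehat{T^2F}(x)>a$. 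At $+\infty$ the computation is shorter: with $\int_{-x}^{+\infty}TF=\ln(1+e^{x-a})-\delta(x)$, $\delta(x)=\int_{-x}^{+\infty}(\theta_{-a}F^*-TF)\uparrow\delta^*:=\int_{\mathbb R}(\theta_{-a}F^*-TF)$, the same identity gives $\widehat{T^2F}(x)=a+\delta(x)+o(1)$, and $\delta^*>0$ because $\widehat{TF}\not\equiv-a$. Combining: $\widehat{T^2F}$ is continuous, $\ge a$, never $=a$ at a finite point, and has $\liminf$ strictly above $a$ at both ends, so a compactness argument gives $\inf_{\mathbb R}\widehat{T^2F}>a$. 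The hard part is precisely this pair of tail estimates — one must show the deficit $\theta_{-a}F^*-TF$ does not decay faster than $e^{-t}$ as $t\to+\infty$, equivalently that $\widehat{TF}$ stays bounded away from $-a$ near $+\infty$ (its limit there being $-a-\varepsilon<-a$) — and this is exactly where the non-constancy of $\widehat F$, propagated via the injectivity of $T$, must be used quantitatively rather than merely qualitatively.
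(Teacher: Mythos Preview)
Your argument for the first chain is identical to the paper's. For the strict contraction under $T^2$, your approach is correct but genuinely different from the paper's. The paper works directly with the sup: it locates, by right-continuity of $\widehat F$, an interval $(a,b)$ on which $\widehat F\le M'<M:=\sup_{\mathbb R}\widehat F$, and then shows by an explicit computation that this local defect produces a multiplicative gain $TF(x)\ge\kappa\,\theta_{-M}F^*(x)$ for $x\ge -a$ with an explicit $\kappa>1$; one more application of $T$ yields two explicit pointwise upper bounds for $T^2F$ (of the form $(\theta_MF^*)^\kappa$ on one half-line and $\kappa'\theta_MF^*$ on the other, with $\kappa'<1$), from which $\sup_{\mathbb R}\widehat{T^2F}<M$ follows together with the limits at $\pm\infty$. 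Your route instead exploits the closed-form identity $\widehat{TG}(x)=x-\ln\bigl(e^{\int_{-x}^{+\infty}G}-1\bigr)$ to obtain exact asymptotics $\widehat{TF}(s)\to -a-\varepsilon$ and $\widehat{T^2F}(x)\to a+\delta^*$, and then combines these with a ``never attains the boundary value at a finite point'' argument (via injectivity of $T$) and compactness. The paper's proof is shorter and fully constructive, with the strictness of the contraction read off from explicit constants $\kappa,\kappa'$; yours yields more precise tail information (you actually identify the limits $\widehat{T^2F}(\pm\infty)$ in terms of the integrated deficits $\varepsilon,\delta^*$) and is perhaps more conceptual, at the cost of a longer argument and the extra step ruling out $\widehat{T^2F}(x_0)=a$ at a finite $x_0$.
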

\begin{lemma}
  \label{lemunifint} Let $F\in\cal D$ be such that $\widehat F$ is
  bounded. Then, $\widehat {T^kF}$ is continuously differentiable for
  $k\geq 2$, and the family of derivatives $(\widehat{T^kF})',k\geq 3$ is
  uniformly integrable:
\begin{equation*}
\sup_{k\geq 3}{\int_{|x|> M}\left|(T^kF)'(x)\right|dx}\xrightarrow[M
\to\infty]{}0.
\end{equation*}
\end{lemma}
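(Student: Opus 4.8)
The plan is to deduce everything from the uniform sandwich that Lemma~\ref{lm:maxamplitude} already supplies. Since $\widehat F$ is assumed bounded, iterating that lemma gives $C:=\sup_{k\ge 0}\big\|\widehat{T^kF}\big\|_\infty<\infty$, which is exactly the two-sided bound $\theta_{-C}F^*\le T^kF\le\theta_C F^*$ for all $k\ge 0$. From it I read off the $k$-uniform facts used repeatedly below: $0<T^kF<1$ everywhere, $\int_{-x}^{\infty}T^kF<\infty$, and the exponential tail estimates $T^kF(x)\le e^{C}e^{-x}$ on $[0,\infty)$ and $1-T^kF(x)\le e^{C}e^{x}$ on $(-\infty,0]$. (Note that these already give the displayed statement \emph{for the un-hatted derivatives}, since $\int_{|x|>M}\big|(T^kF)'\big|=T^kF(M)+1-T^kF(-M)\le 2\,F^*(M-C)\to 0$ uniformly in $k$; the real content is thus the analogous control on $(\widehat{T^kF})'$.)

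For continuous differentiability I would use that $x\mapsto-\int_{-x}^{\infty}G$ is $1$-Lipschitz, so $TG$ is Lipschitz, hence continuous, for every $G\in\mathcal D$; and that when $G$ is continuous $TG$ is $C^1$ with $(TG)'(x)=-\,TG(x)\,G(-x)$. Applying this to $G=TF$ and iterating shows that $T^kF$ is $C^1$ for $k\ge 2$, and since $0<T^kF<1$ the transform $\widehat{T^kF}$ is $C^1$ too. Plugging the derivative formula into $(\widehat G)'=1+G'/(G(1-G))$ gives the working identity
\[
\big(\widehat{T^kF}\big)'(x)\;=\;1-\frac{T^{k-1}F(-x)}{1-T^kF(x)}\qquad(k\ge 2),
\]
and it suffices to dominate the right-hand side, for $k\ge 3$, by a single integrable function of $x$. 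On any fixed compact set this is clear because there $1-T^kF$ is bounded below by a positive constant depending only on $C$, so only the two tails are at issue. The right tail is immediate: by the triangle inequality the numerator in $\big|(1-T^kF(x))-T^{k-1}F(-x)\big|/(1-T^kF(x))$ is at most $T^kF(x)+(1-T^{k-1}F(-x))\le 2e^{C}e^{-x}$, and for $x$ large the denominator exceeds $\tfrac12$, so $\big|(\widehat{T^kF})'(x)\big|\le 4e^{C}e^{-x}$ for all such $x$ and all $k\ge 2$.

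The left tail is where the work lies, and I expect it to be the only genuine obstacle. Setting $y:=-x\to+\infty$, $H:=T^{k-1}F$ and $Q(y):=\int_y^{\infty}H$, one has $Q'(y)=-H(y)$ and $(\widehat{T^kF})'(-y)=1+Q'(y)/(1-e^{-Q(y)})$; since $Q(y)\to 0$ uniformly in $k$, expanding $1-e^{-Q}$ about $Q$ peels off an error $O(Q'(y))=O(e^{-y})$ and leaves the term $1+Q'(y)/Q(y)=(Q(y)+Q'(y))/Q(y)$. The numerator is a ``defect'' that vanishes identically when $H$ is exactly exponential: writing $Q(y)+Q'(y)=\int_0^{\infty}\big(H(y+s)-H(y)e^{-s}\big)\,ds$, substituting $H(t)=\theta_{\widehat H(t)}F^*(t)$ and using the sandwich give $\big|H(y+s)-H(y)e^{-s}\big|\le e^{C}e^{-y}e^{-s}\big|e^{\widehat H(y+s)}-e^{\widehat H(y)}\big|+O(e^{-2y}e^{-s})$; combining with $Q(y)\ge c\,e^{-y}$ and $\big|e^{\widehat H(y+s)}-e^{\widehat H(y)}\big|\le e^{C}\int_y^{y+s}\big|(\widehat H)'\big|$ yields
\[
\Big|\,1+\frac{Q'(y)}{Q(y)}\,\Big|\;\le\;\frac{e^{2C}}{c}\int_y^{\infty}\big|(\widehat H)'(t)\big|\,dt\;+\;O(e^{-y}).
\]
But $\int_y^{\infty}\big|(\widehat H)'\big|$ is the \emph{right}-tail mass of $(\widehat{T^{k-1}F})'$ — which is meaningful only once $k-1\ge 2$, whence the restriction $k\ge 3$ — and this was bounded by $4e^{C}e^{-y}$ in the previous paragraph, uniformly in $k$. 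Hence $\big|(\widehat{T^kF})'(-y)\big|\le\mathrm{const}\cdot e^{-y}$ for $y$ large, uniformly in $k\ge 3$, and the desired $L^1$ domination (hence the stated uniform integrability) follows. The delicate point is precisely this reduction: one must push the near-cancellation $Q+Q'$ far enough to see that the left tail of $(\widehat{T^kF})'$ is steered by the \emph{already controlled} right tail of $(\widehat{T^{k-1}F})'$, and not by the a priori unruly left tail of that iterate, while keeping all constants independent of $k$.
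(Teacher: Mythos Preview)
Your argument is correct and reaches the same conclusion as the paper, but the left-tail step is organized differently. Both proofs start from the identity $(\widehat{T^kF})'(x)=\dfrac{1-T^kF(x)-T^{k-1}F(-x)}{1-T^kF(x)}$ and dispose of the right tail by direct substitution of the sandwich $\theta_{-C}F^*\le T^kF\le\theta_C F^*$. For the left tail the paper takes a much shorter route: it observes that for $k\ge 3$ the numerator $N(x):=1-T^kF(x)-T^{k-1}F(-x)$ is $C^1$ with $N'(x)=T^{k-1}F(-x)\big(T^kF(x)-T^{k-2}F(x)\big)$, writes $N(x)=\int_{-\infty}^xN'$, and bounds each factor via the sandwich to get $|N'(u)|=O(e^{2u})$, hence $|N(x)|=O(e^{2x})$ against a denominator $\ge 1-\theta_CF^*(x)=\Theta(e^x)$. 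Your approach instead passes to $Q(y)=\int_y^\infty T^{k-1}F$, expands $1-e^{-Q}$, and reduces the estimate to the \emph{right}-tail control on $(\widehat{T^{k-1}F})'$ obtained one paragraph earlier; this is sound and gives the same $O(e^{-y})$ bound, but it is a longer detour and the ``peeled-off error'' is really $O(Q)$ rather than $O(Q')$ (both are $O(e^{-y})$, so the conclusion is unaffected). The payoff of the paper's route is brevity and the explicit factor $T^kF-T^{k-2}F$, which makes the role of the sandwich transparent; the payoff of yours is a clear conceptual explanation of why the restriction $k\ge 3$ arises, namely that one needs $(\widehat{T^{k-1}F})'$ to exist and to already enjoy the right-tail bound.
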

We are now in position to provide a complete description of the dynamics
of $T$ on $\cal D$.
\begin{theorem}[Dynamics of $T$ on $\cal D$]
\label{tm:weakconv} 
Let $F\in\cal D$.
Assume $F$ is not the $0$ function and $\int_{0}^{\infty}F < +\infty$
(otherwise $(T^kF)_{k\geq 1}$ trivially alternates between the $0$ and $1$
functions). Then, there exists a constant $\gamma\in\mathbb R$ dependent on
$F$ such that $\widehat{T^{2k}F}\xrightarrow[k\to\infty]{}\gamma\textrm{
  and }\widehat{T^{2k+1}F}\xrightarrow[k\to\infty]{}-\gamma,\textrm{
  uniformly on }\mathbb R.$
In particular, 
\begin{equation*}
T^{2k}F \xrightarrow[k\to\infty]{} \theta_{\gamma}F^*\textrm{ and }T^{2k+1}F \xrightarrow[k\to\infty]{} \theta_{-\gamma}F^*,\textrm{ uniformly on }\mathbb R.
\end{equation*}
\end{theorem}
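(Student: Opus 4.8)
\textbf{Proof plan for Theorem \ref{tm:weakconv}.}

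The plan is to combine the three lemmas just stated into a convergence argument for the sequence of transforms $\widehat{T^kF}$. First I would dispose of the degenerate cases: if $F\equiv 0$ then $TF\equiv 1$ and $T1\equiv 0$, so the iterates alternate trivially; similarly if $\int_0^\infty F=\infty$ then $TF\equiv 0$ and we are back to the previous case. So assume henceforth $F\in{\cal D}\setminus\{0\}$ with $\int_0^\infty F<\infty$. By Lemma \ref{lm:boundedness}, $\widehat{T^4F}$ is bounded; replacing $F$ by $T^4F$ (which only shifts the index by an even number and hence does not affect the even/odd dichotomy in the conclusion), I may assume from the outset that $\widehat F$ is bounded, say $\alpha_0:=\inf_{\mathbb R}\widehat F$ and $\beta_0:=\sup_{\mathbb R}\widehat F$ are finite.

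Next I would set $\alpha_k:=\inf_{\mathbb R}\widehat{T^kF}$ and $\beta_k:=\sup_{\mathbb R}\widehat{T^kF}$. The first inequality chain in Lemma \ref{lm:maxamplitude} gives $\alpha_{k+1}\ge -\beta_k$ and $\beta_{k+1}\le -\alpha_k$, so the even-index sequences $(\alpha_{2k})_k$ and $(\beta_{2k})_k$ are monotone (non-decreasing and non-increasing respectively) and bounded, hence convergent; write $\alpha_{2k}\uparrow\alpha_\infty$ and $\beta_{2k}\downarrow\beta_\infty$, with necessarily $\alpha_\infty\le\beta_\infty$, and by the same relations $\alpha_{2k+1}\to-\beta_\infty$, $\beta_{2k+1}\to-\alpha_\infty$. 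The strict-contraction part of Lemma \ref{lm:maxamplitude} will be used to force $\alpha_\infty=\beta_\infty=:\gamma$: if some subsequential limit of $\widehat{T^{2k}F}$ (in a sense to be made precise) were non-constant, applying $T^2$ and invoking the strict inequalities would contradict the convergence of $\alpha_{2k},\beta_{2k}$ to the \emph{same} bounds as their shifts. To make ``subsequential limit'' precise and to upgrade pointwise control to uniform control, I would use Lemma \ref{lemunifint}: for $k\ge 3$ the functions $\widehat{T^kF}$ are $C^1$ with uniformly integrable derivatives, and they are uniformly bounded, so by an Arzel\`a--Ascoli / Helly-type argument along subsequences they converge uniformly on compacts, while the uniform integrability of the derivatives prevents any oscillation from escaping to infinity — this is exactly what promotes ``uniform on compacts'' to ``uniform on $\mathbb R$''. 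Combining: every subsequential uniform limit of $\widehat{T^{2k}F}$ is a constant, that constant must equal $\gamma=\alpha_\infty=\beta_\infty$, hence the whole sequence converges uniformly to $\gamma$, and likewise $\widehat{T^{2k+1}F}\to-\gamma$ uniformly.

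Finally, translating back through the inversion formula $T^kF(x)=\theta_{\widehat{T^kF}(x)}F^*(x)$ and using uniform continuity of $F^*$ together with the uniform convergence $\widehat{T^{2k}F}\to\gamma$ yields $T^{2k}F\to\theta_\gamma F^*$ uniformly on $\mathbb R$, and symmetrically for the odd iterates. The main obstacle I anticipate is the rigidity step — ruling out $\alpha_\infty<\beta_\infty$: one has to argue that a non-degenerate gap between the running infimum and supremum of $\widehat{T^{2k}F}$ survives in the limit as a genuinely non-constant limit function (so that Lemma \ref{lm:maxamplitude}'s strict contraction applies and produces a contradiction), and this requires precisely the equicontinuity and tightness-of-derivatives input from Lemma \ref{lemunifint} rather than the mere boundedness from Lemma \ref{lm:maxamplitude}; getting the interplay between ``$\inf/\sup$ converge'' and ``the functions themselves converge to a constant'' right is the delicate part.
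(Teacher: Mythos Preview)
Your plan is essentially the paper's own proof: reduce to bounded $\widehat{T^kF}$ via Lemma~\ref{lm:boundedness}, use Lemma~\ref{lm:maxamplitude} to get monotone convergence of $\inf_{\mathbb R}\widehat{T^{2k}F}$ and $\sup_{\mathbb R}\widehat{T^{2k}F}$, extract a subsequential limit by Arzel\`a--Ascoli, upgrade compact to uniform convergence via Lemma~\ref{lemunifint}, and close with the strict-contraction part of Lemma~\ref{lm:maxamplitude}. Two cosmetic differences: the paper applies Arzel\`a--Ascoli to $T^{2k}F$ itself (these are $1$-Lipschitz, so equicontinuity is free) and then transfers to $\widehat{T^{2k}F}$ via continuity of $\widehat{\cdot}$ on $\mathcal D_M$, whereas you work directly with $\widehat{T^{k}F}$; and the paper makes explicit one step you leave implicit --- namely, that $T$ is continuous on $\mathcal D_M$ with respect to compact convergence (by dominated convergence), so that the shifted subsequence $T^{2(\varphi(k)+1)}F$ converges to $T^2F_\infty$ and hence $\inf_{\mathbb R}\widehat{T^2F_\infty}=\gamma^-$, $\sup_{\mathbb R}\widehat{T^2F_\infty}=\gamma^+$ as well, which is exactly what the strict contraction contradicts.
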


\begin{proof}
By Lemma \ref{lm:boundedness}, one can choose a
large enough $M\geq 0$ for $T^4F$ to lie in the subspace
$${\cal D}_M=\{F\in {\cal D}, 
-M\leq \widehat F\leq M\}=\{F\in {\cal D}, \,\theta_{-M}F^*\leq F\leq
\theta_{M}F^*\}.$$ Lemma \ref{lm:maxamplitude} guarantees the stability of
${\cal D}_M$ under the action of $T$, so the whole sequence $(T^{k}F)_{k\geq 4}$ remains in ${\cal D}_M$. Even better, the
bounded real sequences $(\inf_{\mathbb R}\widehat{T^{2k}F})_{k\geq 2}$ and
$(\sup_{\mathbb R}\widehat{T^{2k}F})_{k\geq 2}$ are monotone, hence
convergent, say to $\gamma^-$ and $\gamma^+$ respectively. All we have to
show is that $\gamma^-=\gamma^+$; convergence of
$(\widehat{T^{2k+1}F})_{k\geq 2}$ to the opposite constant will then simply
follow from property (\ref{tshift}) .

By Arzela-Ascoli theorem, the family of (clearly bounded and 1-Lipschitz)
functions $(T^{2k}F)_{k\geq 2}$ is relatively compact with respect to
compact convergence. Thus, there exists a convergent sub-sequence:
\begin{equation}
\label{eq:conv} T^{2\varphi(k)}F\xrightarrow[k\to\infty]{}F_\infty.
\end{equation}
From the uniform continuity of $y\mapsto \ln\frac{y}{1-y}$ on every compact
subset of $]0,1[$ (Heine's theorem), it follows that the restriction of the
$\widehat{\cdot}$ transform to ${\cal D}_M$ is continuous with respect to
compact convergence. Hence,
\begin{equation*}
\label{eq:conv2} \widehat{T^{2\varphi(k)}F}\xrightarrow[k\to\infty]{}\widehat{F_\infty}.
\end{equation*}
Even better, the uniform integrability of variations stated in Lemma
\ref{lemunifint} makes the above compact convergence perfectly equivalent
to uniform convergence on all $\mathbb R$. In particular,
\begin{equation}
\label{comb1}
\inf_{\mathbb R} \widehat{F_\infty}=\lim_{k\to\infty}\uparrow\inf_{\mathbb R}
\widehat{T^{2\varphi(k)}F}=\gamma^- ~\mbox{and}~ \sup_{\mathbb R}
\widehat{F_\infty}=\lim_{k\to\infty}\downarrow\sup_{\mathbb
  R}\widehat{T^{2\varphi(k)}F}=\gamma^+.
\end{equation} 
On the other hand, a
straightforward use of the the dominated convergence Theorem shows that the
restriction of $T$ to ${\cal D}_M$ is continuous with respect to compact
convergence. Therefore, (\ref{eq:conv}) implies
\begin{equation*}
T^{2(\varphi(k)+1)}F\xrightarrow[k\to\infty]{}T^2F_\infty.
\end{equation*}
But using exactly the same arguments as above (note that $\gamma^-,\gamma^+$ do not depend on $\varphi$), we obtain a
similar conclusion :
\begin{equation}
\label{comb2}
\inf_{\mathbb R}\widehat{T^2F_\infty} = \gamma^- \textrm{ and }
\sup_{\mathbb R}\widehat{T^2F_\infty} =\gamma^+.
\end{equation}
By the second part of Lemma \ref{lm:maxamplitude}, having both (\ref{comb1}) and
(\ref{comb2}) implies that $\gamma^-=\gamma^+$.
\end{proof}

\subsection{Strong attractiveness.}
So far, we have established the distributional convergence of the message
process. To complete the algorithm analysis, we now need to prove
sample-path wise convergence.  We note that Aldous and Bandyopadhyay
\cite{maxtype,bivar} have studied the special case where the i.i.d. initial
messages $(\langle v\to\dot{v}\rangle^0_{\cal T})_{v\in{\cal V}^*}$ are
distributed according to the fixed point $F^*$. They established
$L^2$-convergence of the message process to some unique stationary
configuration which is independent of $(\langle v\to\dot{v}\rangle^0_{\cal
  T})_{v\in{\cal V}^*}$. They call this the {\em bivariate uniqueness
  property}. This sub-section is dedicated to extending such a property
to the case of $F$-distributed i.i.d. initial messages, where $F$ is any tail
distribution satisfying the assumption of Theorem \ref{tm:weakconv},
namely:
\begin{equation}
\label{assumption}\int_{0}^{\infty}{F}<\infty,\textrm{ or equivalently }\mathbb
E\left[\left(\langle v\to\dot{v}\rangle^0_{\cal
T}\right)^+\right]<\infty.
\end{equation}
Recall that, if (\ref{assumption}) does not hold, then $(T^kF)_{k\geq 1}$
simply alternates between the $0$ and $1$ functions. In other words, all
messages in $\cal T$ become almost surely infinite after the very first
iteration. Henceforth, we will assume (\ref{assumption}) to hold, which is
in particular the case if all initial messages are set to zero.  We first
state a Lemma that will allow us to fix the problem of non-continuity of
the update and decision rules on $\cal T$ caused by the infinite number of
terms involved in the minimization.
\begin{lemma}
\label{lm:welldefiniteness}
Under assumption (\ref{assumption}), $\displaystyle{\pi^k_{\cal
    T}(v)=\argmin_{w\sim v}{\left\{\dift{w}{v}{k}\right\}}}$ is a.s. well
defined for every $k\geq 4,v\in\cal V$ despite the infinite number of terms
involved in the argmin. Moreover,
\begin{equation}
\label{eq:unifdomk}
\sup_{k\geq 4}\mathbb P\left(\argmin_{i\geq 1}{\left\{\dift{v.i}{v}{k}\right\}}\geq i_0\right)\xrightarrow[i_0\to\infty]{}0.
\end{equation}
\end{lemma}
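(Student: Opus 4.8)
The plan is to study, for a fixed $v\in{\cal V}$ and $k\geq 4$, the family $\dift{v.i}{v}{k}=\ewt{v}{v.i}-\mgt{v.i}{v}{k}$, $i\geq 1$, over which the minimization runs. Recall that $\ewt{v}{v.i}=\xi^v_i$ is the $i$-th point of the ordered rate-one Poisson process $\xi^v$, and that, the subtrees hanging from the children $v.i$ being disjoint, the messages $Y_i:=\mgt{v.i}{v}{k}$ are i.i.d.\ with common tail $T^kF$ and independent of $\xi^v$. The governing intuition is that $\xi^v_i$ grows linearly in $i$ whereas each $Y_i$ carries only a $k$-uniform exponential upper tail, so $\xi^v_i-Y_i\to+\infty$ fast and its infimum over $i$ is, with high probability, attained at a small index. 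The three ingredients I would assemble are: (a) $k$-uniform tail bounds on the $Y_i$; (b) an estimate showing large indices cannot win the minimization; (c) a short argument promoting this to a.s.\ existence and uniqueness of the minimizer.

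For (a), Lemma~\ref{lm:boundedness} provides $M=M(F)\geq 0$ with $T^4F\in{\cal D}_M$, and by the first half of Lemma~\ref{lm:maxamplitude} the band ${\cal D}_M$ is stable under $T$; hence $\theta_{-M}F^*\leq T^kF\leq\theta_MF^*$ for every $k\geq 4$, with $M$ not depending on $k$. In particular $\mathbb P(Y_i>x)\leq e^{M-x}$ and $\mathbb P(Y_i<-x)\leq e^{M-x}$ for all real $x$. For (b), since $\xi^v_i$ has the $\mathrm{Gamma}(i,1)$ law, $\mathbb E[e^{-\xi^v_i}]=2^{-i}$; combining this with $F^*(y)\leq e^{-y}$ and the upper bound from (a) gives, for every $B\geq 0$,
\[
\mathbb P\big(\xi^v_i-Y_i\leq B\big)=\mathbb P\big(Y_i\geq\xi^v_i-B\big)\leq e^{M+B}\,\mathbb E\big[e^{-\xi^v_i}\big]=e^{M+B}2^{-i},
\]
so $\sum_{i\geq i_0}\mathbb P(\xi^v_i-Y_i\leq B)\leq e^{M+B}2^{1-i_0}$, while the lower bound from (a) yields $\mathbb P(\xi^v_1-Y_1>B)\leq\mathbb P(\xi^v_1>B/2)+\mathbb P(Y_1<-B/2)\leq(1+e^M)e^{-B/2}$.

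To finish, I would use that if the argmin sits at an index $j\geq i_0$ then $\inf_{i\geq i_0}(\xi^v_i-Y_i)\leq\xi^v_1-Y_1$, so that event lies in $\{\xi^v_1-Y_1>B\}\cup\{\inf_{i\geq i_0}(\xi^v_i-Y_i)\leq B\}$, giving for all $B\geq 0$ and $i_0\geq 1$
\[
\sup_{k\geq 4}\mathbb P\Big(\argmin_{i\geq 1}\{\dift{v.i}{v}{k}\}\geq i_0\Big)\leq(1+e^M)e^{-B/2}+e^{M+B}2^{1-i_0};
\]
sending $i_0\to\infty$ and then $B\to\infty$ proves (\ref{eq:unifdomk}). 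The same estimate, comparing instead $\inf_{i\geq i_0}(\xi^v_i-Y_i)$ with $\min_{1\leq i<i_0}(\xi^v_i-Y_i)$, shows that a.s.\ the infimum over all $i\geq 1$ is attained among finitely many terms; it is attained uniquely because the increments of $\xi^v$ are absolutely continuous and independent of $(Y_i)$, so the $\xi^v_i-Y_i$ are a.s.\ pairwise distinct. For $v\neq\varnothing$ the argmin defining $\pit{v}{k}$ has the one extra, a priori only $\overline{\mathbb R}$-valued, term $\ewt{\dot v}{v}-\mgt{\dot v}{v}{k}$; a single additional summand destroys neither the attainment nor (a.s.) the uniqueness, so $\pit{v}{k}$ is a.s.\ well defined for all $k\geq 4$, $v\in{\cal V}$.

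The main point is \emph{uniformity in $k$}: the whole estimate is vacuous unless $M$ -- hence the exponential tail of $Y_i$ -- is chosen independently of $k$, which is exactly why the statement is confined to $k\geq 4$, the range in which Lemma~\ref{lm:boundedness} first places, and Lemma~\ref{lm:maxamplitude} then keeps, the message law inside the fixed band ${\cal D}_M$. Given that, what remains is a routine Borel--Cantelli-type comparison of linearly growing Poisson points against a $k$-uniform exponential tail. A minor wrinkle is that at $k=4$ the parent-to-child message $\mgt{\dot v}{v}{4}$ has tail $T^3F$, which need not sit in any ${\cal D}_M$; but it enters only as one lone (possibly infinite) term in the minimization and is therefore harmless.
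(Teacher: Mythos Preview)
Your proof is correct and follows the same strategy as the paper's: both extract from Lemmas~\ref{lm:boundedness} and~\ref{lm:maxamplitude} the $k$-uniform sandwich $\theta_{-M}F^*\leq T^kF\leq\theta_MF^*$ for all $k\geq4$, and then run a Borel--Cantelli comparison of the linear growth of the Poisson points $\xi^v_i$ against the resulting exponential tails of the messages. The only difference is in the bookkeeping. The paper compares the $i$-th term directly with the first, observes that $\mathbb P\big(\dift{v.i}{v}{k}\leq\dift{v.1}{v}{k}\big)=\mathbb P(\xi_{i-1}\leq X_k-Y_k)$ with $X_k,Y_k$ i.i.d.\ of tail $T^kF$, and then uses the identity $\sum_{i\geq1}\mathbb P(\xi_i\leq Z)=\mathbb E[Z^+]$ to reduce everything to the single bound $\mathbb E[(X_k-Y_k)^+]\leq\mathbb E|X^*|+\sup_{\mathbb R}|\widehat{T^kF}|$. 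You instead compare against a floating threshold $B$ and use the Gamma Laplace transform $\mathbb E[e^{-\xi^v_i}]=2^{-i}$ to get an explicit geometric decay in $i$. Your route is slightly more hands-on but delivers more explicit quantitative control; the paper's is a line shorter. You also spell out the a.s.\ uniqueness of the minimizer and the handling of the extra parent term for $v\neq\varnothing$, both of which the paper leaves implicit.
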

With this uniform control in hand, we are now ready to prove the
strong convergence of BP on $\cal T$.
\begin{theorem}[Convergence of BP on $\cal T$]
\label{tm:strongconv} Assume the i.i.d. initial messages satisfy
(\ref{assumption}). Then, up to some additive constant $\gamma\in\mathbb R$, the recursive tree process 
defined by (\ref{eq:recas}) converges to the unique stationary configuration $\langle
\cdot\!\to\!\cdot\rangle^*_{\cal T}$ in the following sense: for every $v\in{\cal V}^*$,
\begin{equation*}
\langle v\to \dot{v}\rangle^{2k}_{{\cal T}} \xrightarrow[k\to\infty]{L^2}
\langle v\to \dot{v}\rangle^*_{{\cal T}}+\gamma \qquad\textrm{ and }\qquad \langle v\to \dot{v}\rangle^{2k+1}_{{\cal T}} \xrightarrow[k\to\infty]{L^2}
\langle v\to \dot{v}\rangle^*_{{\cal T}}-\gamma.\\
\end{equation*}
Further, defining $\pi^*_{\cal T}$ as the assignment induced by $\langle
\cdot\!\to\!\cdot\rangle^*_{\cal T}$ according to rule (\ref{beliefalgo}),
we have convergence of decisions at the root:
\begin{equation*}
\pi^k_{{\cal
      T}}(\varnothing)\xrightarrow[k\to\infty]{P}\pi^*_{{\cal
      T}}(\varnothing).
\end{equation*}
\end{theorem}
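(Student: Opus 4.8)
The plan is to promote the bivariate-uniqueness theorem of Aldous and Bandyopadhyay, which handles $F^*$-distributed initial messages, to arbitrary initial laws satisfying (\ref{assumption}). Write $F$ for the common tail distribution of the i.i.d.\ initial messages $\langle v\to\dot v\rangle^0_{\cal T}$; since $F\neq 0$ and $\int_0^\infty F<\infty$, Theorem \ref{tm:weakconv} provides the constant $\gamma$ of the statement, with $\widehat{T^{2k}F}\to\gamma$ and $\widehat{T^{2k+1}F}\to-\gamma$ uniformly. Unwinding the $\widehat{\cdot}$ transform, this says that for every $\varepsilon>0$ there is an (even) $k_0$ such that the one-time marginal of $\langle v\to\dot v\rangle^{2k_0}_{\cal T}$ has tail distribution trapped between $\theta_{\gamma-\varepsilon}F^*$ and $\theta_{\gamma+\varepsilon}F^*$ (and, from an odd time, between $\theta_{-\gamma-\varepsilon}F^*$ and $\theta_{-\gamma+\varepsilon}F^*$). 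This $\varepsilon$-control of a single marginal is the quantitative ingredient I would feed into a coupling.

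For the $L^2$ convergence of messages, fix an oriented edge and $\varepsilon>0$, pick the associated even $k_0$, and restart the clock there. On the very same PWIT $\cal T$ I would run, alongside the genuine process $M_j:=\langle\cdot\to\cdot\rangle^{2k_0+2j}_{\cal T}$, two auxiliary recursive tree processes driven by (\ref{eq:recas}): a lower one $L$ started from i.i.d.\ $\theta_{\gamma-\varepsilon}F^*$ messages and an upper one $U$ started from i.i.d.\ $\theta_{\gamma+\varepsilon}F^*$ messages, with a standard monotone coupling of the three initial families so that $L_0\leq M_0\leq U_0$ a.s.\ coordinatewise (possible thanks to the marginal trapping above; and all three processes satisfy (\ref{assumption}), hence are a.s.\ well defined past time $4$ by Lemma \ref{lm:welldefiniteness}). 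Since one application of (\ref{eq:recas}) reverses the coordinatewise order of the incoming messages while two applications preserve it, the sandwich $L_{2j}\leq M_{2j}\leq U_{2j}$ persists for all $j$. Now shifting all initial messages by a constant $s$ merely replaces the step-$m$ message by itself plus $(-1)^m s$; thus $L$ is literally an $F^*$-initialized process with $\gamma-\varepsilon$ added back at even steps, so the Aldous--Bandyopadhyay $L^2$ convergence to the unique stationary configuration gives $L_{2j}\to\langle\cdot\to\cdot\rangle^*_{\cal T}+\gamma-\varepsilon$ and, identically, $U_{2j}\to\langle\cdot\to\cdot\rangle^*_{\cal T}+\gamma+\varepsilon$ in $L^2$ with the \emph{same} limiting configuration. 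The sandwich then yields $\big|M_{2j}-(\langle\cdot\to\cdot\rangle^*_{\cal T}+\gamma)\big|\leq\big|L_{2j}-(\langle\cdot\to\cdot\rangle^*_{\cal T}+\gamma)\big|+\big|U_{2j}-(\langle\cdot\to\cdot\rangle^*_{\cal T}+\gamma)\big|$ a.s., so $\limsup_j\big\|M_{2j}-(\langle\cdot\to\cdot\rangle^*_{\cal T}+\gamma)\big\|_2\leq 2\varepsilon$; as $(M_{2j})_j$ is a tail of $(\langle\cdot\to\cdot\rangle^{2k}_{\cal T})_k$ and $\varepsilon$ is arbitrary, $\langle v\to\dot v\rangle^{2k}_{\cal T}\to\langle v\to\dot v\rangle^*_{\cal T}+\gamma$ in $L^2$. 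Running the same argument from an odd starting time gives $\langle v\to\dot v\rangle^{2k+1}_{\cal T}\to\langle v\to\dot v\rangle^*_{\cal T}-\gamma$ in $L^2$.

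For the decision at the root, note that the neighbours of $\varnothing$ are $1,2,\dots$, so $\pi^k_{\cal T}(\varnothing)=\argmin_{i\geq 1}\{\ewt{\varnothing}{i}-\langle i\to\varnothing\rangle^k_{\cal T}\}$ and likewise $\pi^*_{\cal T}(\varnothing)$ with $\langle i\to\varnothing\rangle^*_{\cal T}$. Given $\varepsilon>0$, Lemma \ref{lm:welldefiniteness} provides $i_0$ with $\sup_{k\geq 4}\mathbb P(\pi^k_{\cal T}(\varnothing)\geq i_0)<\varepsilon$; the same tail bound holds for $\pi^*_{\cal T}(\varnothing)$ (either by re-running the proof of Lemma \ref{lm:welldefiniteness} on the stationary solution of (\ref{eq:recas}), whose marginal $F^*$ is summable-tailed, or by passing to the limit in (\ref{eq:unifdomk})), so after enlarging $i_0$ we also have $\mathbb P(\pi^*_{\cal T}(\varnothing)\geq i_0)<\varepsilon$. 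On the complement of these two events --- of probability at least $1-2\varepsilon$ for $k\geq 4$ --- both argmins are attained on $\{1,\dots,i_0-1\}$ and hence coincide with the argmins of the corresponding finite families. Finally, for each fixed $i<i_0$ the message convergence above gives $\ewt{\varnothing}{i}-\langle i\to\varnothing\rangle^k_{\cal T}\to\ewt{\varnothing}{i}-\langle i\to\varnothing\rangle^*_{\cal T}-(-1)^k\gamma$ in probability, and since the shift $-(-1)^k\gamma$ is common to all $i$ it leaves the argmin unchanged; as the limiting values are a.s.\ pairwise distinct (disjoint, independent subtrees below $1,\dots,i_0-1$, together with the continuously-distributed weights $\ewt{\varnothing}{i}$ that are independent of $\langle i\to\varnothing\rangle^*_{\cal T}$), convergence in probability of a finite $\argmin$ applies and the restricted $k$-argmin converges in probability to $\pi^*_{\cal T}(\varnothing)$. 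Putting the bounds together, $\limsup_k\mathbb P(\pi^k_{\cal T}(\varnothing)\neq\pi^*_{\cal T}(\varnothing))\leq 2\varepsilon$, and letting $\varepsilon\downarrow 0$ finishes the proof.

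The main obstacle is the collapsing sandwich in the second step. Lemma \ref{lm:boundedness} alone traps the limit only inside a band of fixed width, which is useless; what makes the band shrink to a point is that Theorem \ref{tm:weakconv} lets the trapping width be made arbitrarily small by launching the coupling late enough, and this has to be married to the exact shift-covariance of (\ref{eq:recas}) and to Aldous--Bandyopadhyay uniqueness, so that both auxiliary processes flow to the \emph{same} stationary configuration up to the controlled shift. A secondary point requiring care is to make the monotone coupling genuinely coordinatewise on the infinite tree (a per-edge quantile coupling, carried out independently across edges) and to transfer the tail estimate (\ref{eq:unifdomk}) to the stationary configuration, so that the root $\argmin$ may be reduced to finitely many terms.
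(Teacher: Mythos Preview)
Your overall architecture is the paper's: trap the restarted process between two shifted $F^*$-initialized processes and invoke Aldous--Bandyopadhyay endogeneity so that both bounds converge to the same stationary configuration up to $\pm\varepsilon$. The decision-at-the-root paragraph is fine and in fact more detailed than the paper's one-line appeal to Lemma~\ref{lm:welldefiniteness}.

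The gap is in the coupling. You set $M_0=\langle\cdot\to\cdot\rangle^{2k_0}_{\cal T}$ and then ask for i.i.d.\ families $L_0,U_0$ (independent of $\cal T$) with $L_0\leq M_0\leq U_0$ a.s. But the family $\big(\langle w\to\dot w\rangle^{2k_0}_{\cal T}\big)_{w}$ is \emph{not} i.i.d.: for instance $\langle 1.1\to 1\rangle^{2k_0}_{\cal T}$ and $\langle 1.1.1\to 1.1\rangle^{2k_0}_{\cal T}$ share the subtree below $1.1.1$. Your ``per-edge quantile coupling'' $L_0(w)=Q_L\big(F_M(M_0(w))\big)$ then makes $L_0(w)$ a deterministic function of $M_0(w)$, hence inherits these correlations and, worse, depends on the PWIT through $M_0$. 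Either way the $L$-process is no longer the standard $F^*$-initialized RTP, and Aldous--Bandyopadhyay does not apply to it.

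The paper sidesteps this by passing to a distributional copy. One observes that $\big(\langle v\to\dot v\rangle^{k+k_\varepsilon}_{\cal T}\big)_{k\geq 0}\stackrel{\cal D}{=}\big(\langle v\to\dot v\rangle^{k,\varepsilon}_{\cal T}\big)_{k\geq 0}$, where the right-hand process is started from genuinely i.i.d.\ $T^{k_\varepsilon}F$ messages, independent of a fresh PWIT. On that copy the Strassen per-edge coupling with i.i.d.\ $F^*$ triples $(X^-_w,X^\varepsilon_w,X^+_w)$ is legitimate, and the sandwich propagates. What transfers back to the original process is the \emph{Cauchy property} in $L^2$, since $\sup_{s,t\geq k}\|\langle v\to\dot v\rangle^{s}-\langle v\to\dot v\rangle^{t}\|_2$ depends only on the joint law. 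The limit is then identified as stationary (via Lemma~\ref{lm:welldefiniteness} to pass the limit through the $\min$) and hence equals $\langle\cdot\to\cdot\rangle^*_{\cal T}$ by endogeneity. Replacing your ``direct sandwich of the actual process'' by this ``distributional copy $+$ Cauchy'' device closes the gap with no change to the rest of your argument.
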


\begin{proof}
  Denote by $F$ the tail distribution function of the initial messages. The
  idea is to construct an appropriate stochastic coupling between our
  $F-$initialized message process and the $F^*$-initialized version and
  then use the endogeneity of the latter to conclude.  We let $\gamma$ be
  the constant appearing in Theorem \ref{tm:weakconv}.  First, observe that
  the dynamics (\ref{eq:recas}) are ``anti-homogeneous'': if we add the
  same constant to every initial message, then that constant is simply
  added to every even message $\langle v\to\dot{v}\rangle^{2k}_{\cal T}$
  and subtracted from every odd message $\langle
  v\to\dot{v}\rangle^{2k+1}_{\cal T}$. Therefore, without loss of
  generality we may assume $\gamma=0$. That is, for any $\beps > 0$ there
  exists $k_\varepsilon\in\mathbb N$ so that
$$\theta_{-\varepsilon}F^*\leq T^{k_\varepsilon}F\leq \theta_{\varepsilon}F^*.$$
By a classical result often termed as Strassen's Theorem, probability
measures satisfying such a stochastic ordering can always be coupled in a
pointwise monotone manner. Specifically, there exists a
probability space $E'=(\Omega',{\cal F}',P')$, possibly differing from the
original space $E=(\Omega,{\cal F},P)$, on which can be 
defined a random variable $X^\varepsilon$ with distribution
$T^{k_\varepsilon}F$ and two random variables $X^{-}$
and $X^{+}$ with distribution $F^*$, in such a way that almost surely,
\begin{equation}
X^--\varepsilon \leq X^\varepsilon \leq X^++\varepsilon . 
\label{ineq}
\end{equation}
Now consider the product space $(\bigotimes_{v\in {\cal V}} E')\otimes E$
over which we can jointly define the PWIT ${\cal T}$ and independent copies
$(X^{-}_v,X^{\varepsilon}_v,X_v^{+})_{v\in \cal V}$ of the triple
$(X^-,X,X^+)$ for each vertex $v \in {\cal V}$.  On $\cal T$, let us
compare the configurations $\big(\langle \cdot\!\to\!\cdot\rangle_{{\cal
    T}}^{k,-}\big)_{k\geq 0}$, $\big(\langle
\cdot\!\to\!\cdot\rangle_{{\cal T}}^{k,\varepsilon}\big)_{k\geq 0}$ and
$\big(\langle \cdot\!\to\! \cdot\rangle_{{\cal T}}^{k,+}\big)_{k\geq 0}$
resulting from three different initial conditions, namely:
\begin{equation*}
\forall v \in {\cal V}^*, \left\{
\begin{array}{lll}
\displaystyle{\langle v\to \dot{v}\rangle^{0,-}_{{\cal T}}}& = & X^{-}_v;\smallskip\\
\displaystyle{\langle v\to \dot{v}\rangle^{0,\varepsilon}_{{\cal T}}}& = & X^{\varepsilon}_v ;\smallskip\\
\displaystyle{\langle v\to \dot{v}\rangle^{0,+}_{{\cal T}}}& = &
X^{+}_v.
\end{array}
\right.
\end{equation*}

Due to anti-monotony and anti-homogeneity of the update rule
(\ref{eq:recas}), inequality (\ref{ineq}) `propagates' in the sense that
for any $k\geq 0$ and $v \in {\cal V}^*$,
\begin{equation*}
\label{coupling}
 \begin{array}{lllll}
\langle v\to \dot{v}\rangle_{{\cal T}}^{2k,-}- \varepsilon & 
\leq &\langle v\to \dot{v}\rangle_{{\cal T}}^{2k,\varepsilon} & 
\leq &\langle v\to \dot{v}\rangle_{{\cal T}}^{2k,+} +\varepsilon ;\\
\langle v\to\dot{v}\rangle_{{\cal T}}^{2k+1,+} -\varepsilon & 
\leq &\langle v\to \dot{v}\rangle_{{\cal T}}^{2k+1,\varepsilon} &
\leq &\langle v\to \dot{v}\rangle_{{\cal T}}^{2k+1,-} +\varepsilon.
\end{array}
\end{equation*}
Now fix $v\in{\cal V}^*$. By construction, 
$$\left(\langle v\!\to\! \dot{v}\rangle_{{\cal
      T}}^{k+k_\beps}\right)_{k\geq 0}\stackrel{{\cal D}}{=}
\left(\langle v\!\to\! \dot{v}\rangle_{{\cal T}}^{k,\varepsilon}\right)_{k\geq 0}.$$
In particular, for every $k\geq k_\varepsilon$ we have
\begin{eqnarray*}
\sup_{s,t\geq k}\left\|\langle v\to \dot{v}\rangle_{{\cal
T}}^{s} - \langle v\to \dot{v}\rangle_{{\cal T}}^{t} \right\|_{L^2}
&  = & \sup_{s,t\geq k-k_\varepsilon}\left\|\langle v\to
\dot{v}\rangle_{{\cal T}}^{s,\varepsilon} -\langle v\to
\dot{v}\rangle_{{\cal T}}^{t,\varepsilon} \right\|_{L^2}\\
& \leq & 2\sup_{t\geq k-k_\varepsilon}\left\|\langle v\to
\dot{v}\rangle_{{\cal T}}^{t,\pm}-\langle v\to \dot{v}\rangle_{{\cal
T}}^{*}\right\|_{L^2}+2\varepsilon.\\
\end{eqnarray*}
But from the bivariate uniqueness
  property established by Aldous and Bandyopadhyay
\cite{maxtype,bivar} for the logistic distribution, it follows that
$$\sup_{t\geq k-k_\varepsilon}\!\!\left\|\langle v\!\to\!
  \dot{v}\rangle_{{\cal T}}^{t,\pm}\!\!-\!\langle v\!\to\!
  \dot{v}\rangle_{{\cal T}}^{*}\right\|_{L^2}\xrightarrow[k\to\infty]{}0.$$
Thus, the sequence $\left(\langle v\!\to\! \dot{v}\rangle_{{\cal
      T}}^{k}\right)_{k\geq 0}$ is Cauchy in $L^2$, hence convergent. Using
Lemma \ref{lm:welldefiniteness} to justify the interchange between limit
and minimization, it is not hard to check that the limiting configuration
has to be stationary, i.e. is a fixed point for the recursion
(\ref{eq:recas}), and that the estimates $\pi^k_{\cal T},k\geq 0$ do in
turn converge (in probability) to the estimate $\pi^*_{\cal T}$ associated
with the limiting configuration.  Note that endogeneity implies uniqueness
of the stationary configuration, and therefore $\pi^*_{\cal T}$ is nothing
but the infinite optimal assignment studied in \cite{zeta}.
\end{proof}

\section{Third step: putting things together.}\label{sec:step3}
Finally, we are now in position to complete the proof of Theorem
\ref{tm:unifL1conv}, using the following remarkable result by Aldous.
\begin{theorem}[Aldous, \cite{zeta}]
\label{tm:aldous2}
Let $\pi^*_{\cal T}$ be the assignment associated with the unique
stationary configuration $\langle \cdot\!\to\!\cdot\rangle^*_{\cal T}$.
Then $\pi^*_{\cal T}$ is a perfect matching on ${\cal T}$, and
\begin{equation}
\label{aldouscv}
\left(\Knn, \pi_{\Knn}^{*}\right) \xrightarrow[n\to\infty]{{\cal D}} \left({\cal T},
  \pi^*_{\cal T}\right).
\end{equation}
\end{theorem}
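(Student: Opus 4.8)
The plan is to follow the \emph{objective method} in the spirit of \cite{zeta}: show that the sequence $(\Knn,\pi^*_{\Knn})$ is tight in the space of labeled geometric graphs --- its underlying-graph marginal already converges by Theorem~\ref{tm:PWITlimit}, and the matching labels take values in the compact set $\{0,1\}$ --- then prove that \emph{every} subsequential limit is the PWIT ${\cal T}$ equipped with the assignment induced by a \emph{stationary} configuration for the recursion~(\ref{eq:recas}), and finally invoke uniqueness of that configuration (endogeneity, \cite{maxtype,bivar}, recalled in Theorem~\ref{tm:strongconv}) to deduce that the limit is unique and equals $({\cal T},\pi^*_{\cal T})$; convergence of the whole sequence follows. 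The assertion that $\pi^*_{\cal T}$ is a genuine perfect matching then drops out, once one rules out that some vertex is ``matched to infinity'' in the limit.

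First I would introduce the finite-$n$ counterparts of the tree messages. Rooting $\Knn$ at $\varnothing$ and relabeling its vertices by words of ${\cal V}$ as in the proof of Theorem~\ref{tm:continuity}, one attaches to each oriented edge $(v,w)$ the exact cavity quantity $\mgn{v}{w}{*}$ --- an appropriate difference of optimal matching costs over $\Knn$ and over a one-vertex-deleted variant --- designed so that $\pi^*_{\Knn}(v)=\argmin_{w\sim v}\{\difn{w}{v}{*}\}$ and so that $\mgn{v}{w}{*}=\min_{u\sim v,\,u\neq w}\{\difn{u}{v}{*}\}$ holds up to an error governed by the shortest cycle through $v$ in $\Knn$. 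The two technical inputs are then: (i) after the scaling $nH'(0^+)$ the law of $\mgn{v}{w}{*}$ stays tight as $n\to\infty$, and with probability close to one, uniformly in $n$, only boundedly many neighbours of $v$ matter in the relevant minimizations --- the analogue of Lemma~\ref{lm:unifctrl} for the exact optimal messages, which leans on assumption~A2; and (ii) $\Knn$ is locally tree-like, so short cycles through a fixed vertex become negligible and the cavity recursion is asymptotically exact. Granting (i)--(ii), any subsequential limit $({\cal T},\ell_\infty,\pi_\infty)$ --- which exists by tightness, carries the PWIT as underlying graph by Theorem~\ref{tm:PWITlimit}, and is a perfect matching because (i) prevents escape to infinity --- must have $\ell_\infty$ solving~(\ref{eq:recas}) at every vertex, hence be a stationary configuration, with $\pi_\infty$ the assignment it induces via~(\ref{beliefalgo}).

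The hard part is precisely this identification step --- establishing (i) and (ii) and pushing the discontinuous update and decision rules through the local limit: one must quantify how much perturbing a single edge weight, or deleting a single vertex, can move the global optimum, and then combine this sensitivity estimate with the light-tail control so as to show that, with high probability uniformly in $n$, both the finite messages and the finite decisions are determined by an $O(1)$-sized neighbourhood of the root --- which is exactly what makes ``local'' convergence meaningful for an object as global as a minimum-cost matching. Once this is in place the conclusion is immediate: by uniqueness of the stationary configuration up to the additive constant $\gamma$ of Theorem~\ref{tm:weakconv} (which cancels in every $\argmin$ anyway), $\pi_\infty=\pi^*_{\cal T}$ for every subsequential limit, whence $\bigl(\Knn,\pi^*_{\Knn}\bigr)\xrightarrow[n\to\infty]{{\cal D}}\bigl({\cal T},\pi^*_{\cal T}\bigr)$; and $\pi^*_{\cal T}$ is a perfect matching, its involution property being in fact directly verifiable from stationarity through the excess $e(v,w):=\mgt{v}{w}{*}+\mgt{w}{v}{*}-\ewt{v}{w}$, for which $\pi^*_{\cal T}(v)=w\iff e(v,w)>0\iff\pi^*_{\cal T}(w)=v$.
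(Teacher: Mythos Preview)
The paper does not prove Theorem~\ref{tm:aldous2} at all: it is quoted verbatim from Aldous~\cite{zeta} and used as a black box in Section~\ref{sec:step3}. So there is no ``paper's own proof'' against which to compare your attempt; you are in effect sketching Aldous's original argument, and your outline does broadly follow the objective-method strategy of~\cite{zeta}.

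That said, a couple of points in your sketch deserve caution if you were to flesh it out. First, the finite-$n$ cavity quantities you introduce are not the ones Aldous actually works with: in~\cite{zeta} the key objects are differences of optimal costs over carefully chosen sub-instances, and the proof that these stay tight and satisfy an asymptotically exact recursion is substantially more delicate than ``error governed by the shortest cycle'' suggests --- cycles in $\Knn$ are not long (girth~$4$), so local tree-likeness is a statement about weighted neighbourhoods, not combinatorial ones. Second, your appeal to uniqueness of the stationary configuration via endogeneity is anachronistic: in Aldous's paper the limiting matching is identified directly through a variational/optimality argument on the PWIT, and the endogeneity interpretation came only later in~\cite{maxtype,bivar}. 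None of this makes your plan wrong, but the ``hard part'' you flag is genuinely hard and constitutes the bulk of~\cite{zeta}; what you have written is a plausible roadmap rather than a proof.
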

\begin{proofof}{Theorem \ref{tm:unifL1conv}}
Using Theorem \ref{tm:continuity} and Skorokhod's representation Theorem,
the above convergence (\ref{aldouscv}) can be extended to include BP's answer at any fixed step $k$:
\begin{equation*}
\left({\cal K}_{nn},\pi^k_{{\cal
K}_{nn}},\pi^*_{{\cal
K}_{nn}}\right)\xrightarrow[n\to\infty]{{\cal D}}\left({\cal
T},\pi^k_{\cal
T},\pi^*_{{\cal T}}\right).
\end{equation*}
In particular, the probability of getting a wrong decision at the root of
$\Knn$ converges as $n\to\infty$ to the probability of getting a wrong
decision at the root of ${\cal T}$: for all $k \geq 0$,
\begin{eqnarray*}
 \mathbb P\left(\pi^k_{{\cal
K}_{nn}}(\varnothing)\neq \pi^*_{{\cal
K}_{nn}}(\varnothing)\right)\xrightarrow[n\to\infty]{}\mathbb
P\left(\pi^k_{{\cal T}}(\varnothing)\neq \pi^*_{{\cal
T}}(\varnothing)\right).
\end{eqnarray*}
Finally, the symmetry of $\Knn$ lets us rewrite the left-hand side as the
expected fraction of errors $\mathbb E\left[d(\pi^k_{{\cal
      K}_{nn}},\pi^*_{{\cal K}_{nn}})\right]$, and Theorem
\ref{tm:strongconv} ensures that the right-hand side vanishes as $k\to\infty$.
\end{proofof}
\section{Conclusion.}\label{sec:conc}
\noindent In this paper we have established that the BP algorithm finds an
almost optimal solution to the random $n\times n$ assignment problem in
time $O(n^2)$ with high probability. The natural lower bound of
$\Omega(n^2)$ makes BP an (order) optimal algorithm. This result
significantly improves over both the worst-case upper bound for exact
convergence of the BP algorithm proved by Bayati, Shah and Sharma
\cite{BSS} and the best-known computational time achieved by Edmonds and
Karp's algorithm \cite{edmund-karp}. Beyond the obvious practical interest
of such an extremely efficient distributed algorithm for locally solving
huge instances of the optimal assignment problem, we hope that the method
used here -- essentially replacing the asymptotic analysis of the algorithm
as the size of the underlying graph tends to infinity by its exact study on
the infinite limiting structure revealed via local weak convergence -- will
become a powerful tool in the fascinating quest for a general mathematical
understanding of loopy BP.

\bibliographystyle{amsplain}
\bibliography{MOR}
\appendix
\section{Proof of Lemma \ref{lm:unifctrl}.}
\label{app1}
The proof of Lemma \ref{lm:unifctrl} lays upon two technical lemmas stated
below. Essentially, the picture is the following: when $i$ gets large, the
length $\ewn{v}{v.i}$ of the $i^{th}$ shortest edge attached to $v$ in
$\Knn$ becomes large too (Lemma \ref{lm:domweights}), whereas the message
$\mgn{v.i}{v}{k}$ passing along that edge remains reasonably small (Lemma
\ref{lm:unifdom}). Therefore, the resulting contribution $\difn{v.i}{v}{k}$
is too large to matter in the minimization.  In what follows, $|v|$ will
denote the number of letters of the word $v \in {\cal V}$, and
$v_1,\ldots,v_{|v|}$ its consecutive letters (e.g. if $v = 1.2.1.3$ then
$|v| = 4$ and $v_1 = 1, v_2= 2, v_3 =1, v_4 = 3$). Also, we will write
$v_{\leq h}$ for the prefix $v_1\cdots v_h$.

\begin{lemma}[Uniform control on edge weights]\label{lm:domweights}
There exist constants $(M_h)_{h\geq 1}$, $\alpha$ and $\beta>0$ such
that for all $v\in {\cal V}, i\geq 1, t\in\mathbb R^+$, and all $n$ large
enough for ${\Knn}$ to contain $v.i$, 
\begin{eqnarray*}
\label{eq:dom}\mathbb P\Big(\ewn{v}{v.i}\leq
t\Big) \leq M_{|v|}\frac{(\alpha t)^i}{i!}e^{\alpha t} & \textrm{
and }& \mathbb P\Big(\ewn{v}{v.1}\geq t\Big)
\leq M_{|v|}e^{-\beta t}.
\end{eqnarray*}
\end{lemma}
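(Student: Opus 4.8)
The plan is to condition on the finite amount of randomness that determines which vertex of $\Knn$ the word $v$ labels, and then read off the conditional law of the edges emanating from that vertex. Write $\nu_n=nH'(0^+)$ for the scaling factor, so that an individual rescaled edge of $\Knn$ has weight $\le t$ with probability exactly $H(t/\nu_n)$, and recall that $v$ is attached to $x_v:=\gamma^\varrho_n(v)$, a deterministic function of the weights of the edges incident to the $|v|$ strict ancestors of $x_v$ along the path to the root --- those weights being exactly what produces the successive ``closest-neighbour'' orderings. Conditionally on this data, the $n-1$ edges joining $x_v$ to its non-parent neighbours split into two groups: (i) the edges reaching vertices not yet encountered, of which there are at least $n-1-|v|$, which are conditionally i.i.d.\ with the unconditioned edge law, hence $\le t$ with probability $H(t/\nu_n)$; and (ii) at most $|v|$ ``back edges'' joining $x_v$ to those ancestors of $x_v$ which lie on the opposite side of the bipartition.

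I would next pin down the conditional law of a back edge. The conditioning forces such an edge incident to an ancestor $y$ to be a uniformly chosen one among the $n-2$ non-parent edges of $y$ other than the one already used by the path; since $x_v$ is selected independently of $y$'s own edge ranking, the elementary identity $\sum_{k=1}^{m}\mathbb P(X_{(k)}\le t)=m\,\mathbb P(X_1\le t)$ for the order statistics $X_{(1)}\le\cdots\le X_{(m)}$ of $m$ i.i.d.\ copies of $X_1$ then shows that such an edge is still $\le t$ with conditional probability at most $\frac{n-1}{n-2}H(t/\nu_n)$. Moreover, back edges incident to distinct ancestors involve disjoint families of underlying edge weights, so the whole family of $n-1$ edges at $x_v$ stays conditionally independent. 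Setting up this conditional description rigorously is the technical core of the argument, and the genuinely delicate point --- hence the expected main obstacle --- is the bookkeeping of the (low-probability) collision events on which $x_v$, or one of its ancestors, coincides with another tree-vertex representing the same $\Knn$-vertex; these corrections, together with the $\frac{n-1}{n-2}$ factors compounded over the $|v|$ levels, are precisely what the depth-dependent constant $M_{|v|}$ is there to absorb (for the finitely many small $n$ for which $\Knn$ just barely contains $v.i$, one simply inflates $M_{|v|}$).

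The remaining work feeds this structure into two elementary tail estimates, and here is where assumptions A1 and A2 enter. From A1 one gets that $H(s)/s$ is continuous on $(0,\infty)$ with limit $H'(0^+)>0$ at $0$ and limit $0$ at $\infty$, so $\alpha:=\sup_{s>0}H(s)/\big(sH'(0^+)\big)$ is finite (and $\ge 1$); thus each of the $n-1$ conditional edge probabilities is $\le\alpha t/n$, and --- once $n$ exceeds a threshold depending only on $|v|$, the $(n-2)$-versus-$n$ loss being outweighed by the at most $|v|$ missing fresh edges --- their sum $\sum_j p_j$ is $\le\alpha t$. For the first inequality, $\{\ewn{v}{v.i}\le t\}$ is exactly the event that at least $i$ of these $n-1$ conditionally independent edges are $\le t$, so the bound $e_i(p_1,\dots,p_{n-1})\le\frac1{i!}\big(\sum_j p_j\big)^i$ on elementary symmetric functions gives conditional probability at most $\frac{(\alpha t)^i}{i!}\le\frac{(\alpha t)^i}{i!}e^{\alpha t}$, and averaging over the conditioning (plus the collision corrections) finishes it. For the second, $\{\ewn{v}{v.1}\ge t\}$ forces all $\ge n-1-|v|\ge\lfloor n/2\rfloor$ fresh edges to exceed $t$, so its probability is at most $\big(1-H(t/\nu_n)\big)^{\lfloor n/2\rfloor}$, and I would bound this by $M_{|v|}e^{-\beta t}$ for a universal $\beta>0$ by splitting on the size of $s=t/\nu_n$: for small $s$ use $H(s)\ge\frac12 H'(0^+)s$ from A1 (yielding decay $e^{-t/4}$), for large $s$ use $1-H(s)\le Ce^{-\beta_0 s}$ from A2 (the factor $C^{\lfloor n/2\rfloor}$ being reabsorbed once $s\ge 2\beta_0^{-1}\ln C$), and for intermediate $s$ note $1-H(s)\le q<1$, which makes $\big(1-H(s)\big)^{\lfloor n/2\rfloor}$ decay geometrically in $n$ whereas $e^{-\beta t}$ does not. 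The crux throughout is thus not these estimates but the conditional structure of $x_v$'s incident edges established in the second paragraph.
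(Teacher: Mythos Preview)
Your plan is genuinely different from the paper's, and the difference is instructive. You condition on the identity of the vertex $x_v$ that $v$ labels and then try to describe the conditional law of its $n-1$ non-parent edges, splitting them into fresh edges and back edges. The paper instead avoids any conditional law entirely: it observes that the event $\{\ewn{v}{v.i}\le t\}$ forces the existence of a cycle-free path $x=(x_0,\ldots,x_k)$ in $\Knn$, $k\le|v|$, starting at the root, such that (a) at least $i-1$ non-parent edges at $x_k$ are $\le t$, and (b) for each $j<k$ the rank of $\{x_j,x_{j+1}\}$ among edges at $x_j$ lies in a window of size $O(|v|)$ determined by $v$. It then takes a union bound over all such paths. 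For a fixed path the events in (a) and (b) involve pairwise disjoint edge sets and are therefore \emph{unconditionally} independent; the bound $\mathbb P(\text{(b) at }x_j)\le(|v|+1)^3/n$ cancels the $n^k$ paths of length $k$, producing $M_{|v|}$, while (a) gives the Poisson-type factor $(\alpha t)^ie^{\alpha t}/i!$. The second inequality is handled the same way with (a) replaced by ``at most one edge at $x_k$ is $\le t$''.

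What the paper's route buys is precisely that it never touches the delicate point you yourself flag. Your proposed resolution of that point is not quite right as stated: the claim that ``back edges incident to distinct ancestors involve disjoint families of underlying edge weights, so the whole family of $n-1$ edges at $x_v$ stays conditionally independent'' breaks down for $|v|\ge 4$. The back edges $\{x_v,y\}$ and $\{x_v,y'\}$ are indeed distinct, but their conditional laws are governed by the rank constraints at $y$ and $y'$, and once there is an ancestor $x_{j''}$ on the same side as $x_v$ with $|j''-j|,|j''-j'|\ge 3$, the single edge $\{y,x_{j''}\}$ (respectively $\{y',x_{j''}\}$) participates both in the rank constraint at $y$ (resp.\ $y'$) and in the one at $x_{j''}$, coupling the two. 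Similarly, ``$x_v$ is selected independently of $y$'s own edge ranking'' fails for the same reason when $y$ is more than two levels above $v$. These entanglements are exactly the collisions-and-corrections bookkeeping you allude to, but they are structural rather than low-probability, and your sketch does not indicate how to handle them; the paper's union bound sidesteps the issue completely. Your treatment of the exponential tail for $\ewn{v}{v.1}$ via $(1-H(t/\nu_n))^{\lfloor n/2\rfloor}$ is fine and matches the paper's in spirit.
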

\begin{proof}
  Suppose $\ewn{v}{v.i}\leq t$. Then by construction, the sequence of words
  $(v_{\leq 0},\dots,v_{\leq |v|})$ represents a path in $\Knn$ starting
  from the root and ending at a vertex from which at least $i$ incident
  edges have length at most $t$. Following down this path and deleting
  every cycle we meet, we obtain a cycle-free path $x=(x_0,\ldots,x_k)$
  ($0\leq k\leq |v|\wedge 2n-1$) starting from the root and satisfying
  \begin{equation}
\label{eq:event1} \textrm{card}\Big\{y\sim x_{k},y\neq x_{k-1},
\ewn{x_k}{y}\leq t\Big\}\geq i-1.
\end{equation}
For $0\leq j<k$, $(x_j,x_{j+1})$ corresponds to some $(v_{\leq p-1},v_{\leq
  p})$, $1\leq p\leq |v|$. By definition of our relabeling, the number of
edges in $\Knn$ that are incident to $v_{\leq p-1}$ and shorter than
$\{v_{\leq p-1},v_{\leq p}\}$ is precisely $v_p-1$ or $v_p$, depending on
the parent-edge. Therefore, there exists $p \in \{1,\ldots,|v|\}$ such that
\begin{equation}
\label{eq:event2} 
v_p-\left\lceil\frac k 2\right\rceil\leq
\textrm{card}\Big\{y\notin\{x_{1},\ldots,x_k\},
\ewn{x_j}{y} < \ewn{x_j}{x_{j+1}}\Big\}\leq v_p.
\end{equation}
The $\left\lceil\frac k 2\right\rceil$ above comes from the fact that only
half of the $x_{1},\ldots,x_k$ are neighbors of $x_j$ in $\Knn$.  We thus
have shown that
\begin{equation*}
  \mathbb P\Big(\ewn{v}{v.i}\leq t\Big) \leq
  \sum_{k=0}^{|v|}{\sum_{x=(x_0,...x_k)}{\mathbb P\left(A_{n,x}\cap
        \bigcap_{j=0}^{k-1}B^j_{n,x}\right)}},
\end{equation*}
where the event $A_{n,x}$ corresponds to (\ref{eq:event1}) and the event
$B^j_{n,x}$ to (\ref{eq:event2}). The summation in the above inequality is
over all possible cycle-free paths $x=(x_0,...x_k)$ starting from the root
in ${\Knn}$.  Now since all the edges involved are pairwise distinct, the
events $A_{n,x},B^{0}_{n,x},...,B^{k-1}_{n,x}$ are independent. Moreover,
\begin{eqnarray*}
\mathbb P\left(B^j_{n,x}\right)& = &
\sum_{p=1}^{|v|}{\sum_{q=v_p-\left\lceil\frac k
2\right\rceil}^{v_p}\frac{1}{n+1-\left\lceil\frac k
2\right\rceil}}\leq \frac {(|v|+1)^3} n ;\\
\mathbb P\left(A_{n,x}\right)& = & \mathbb
\displaystyle{\sum_{q=i-1}^{n-1} {n-1\choose q}{H\left(\frac
t{nH'(0)}\right)}^q{\left(1-H\left(\frac
t{nH'(0)}\right)\right)}^{n-1-q} \leq \frac{(\alpha
t)^i}{i!}e^{\alpha t},}
\end{eqnarray*}
where we have used assumption A1 to define ${\alpha=\frac 1
  {H'(0)}\sup_{\varrho\in\mathbb
    R^+}{\frac{H(\varrho)}{\varrho}}}<+\infty.$ This yields the first bound
since there are less than $n^k$ cycle-free paths $x=(x_0,...,x_k)$ starting
from the root in ${\Knn}$. For the second one, the event $A_{n,x}$ is
simply replaced by $\textrm{card}\Big\{y\sim x_k, y\neq x_{k-1},
\ewn{x_k}{y}\leq t\Big\}\leq 1$, whose probability is straightforwardly
exponentially bounded using assumption A2.
\end{proof}
\begin{lemma}[Uniform control on messages]
\label{lm:unifdom} There exist constants
$(M_{k,h},\beta_{k,h})_{k,h\geq 0}>0$ such that for all $v \in {\cal V}^*$
and $t \in {\mathbb R}^+$, uniformly in $n$ (as long as $n$ is large enough so that $v \in \Knn$),
\begin{equation}
\label{eq:unifdom2}
\mathbb P\Big(\left|\mgn{v}{\dot{v}}{k}\right| \geq t\Big)\leq  M_{k,|v|}e^{-\beta_{k,|v|}t}.
\end{equation}
\end{lemma}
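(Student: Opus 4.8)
The plan is to prove (\ref{eq:unifdom2}) by induction on $k$, controlling both tails of $\mgn{v}{\dot v}{k}$ simultaneously and uniformly over all $v\in{\cal V}^*$. The base case $k=0$ is trivial since $\mgn{v}{\dot v}{0}=0$. For the inductive step, recall that in $\Knn$ the neighbours of $v$ are its parent $\dot v$ together with its children $v.1,\dots,v.(n-1)$, so the update rule reads $\mgn{v}{\dot v}{k+1}=\min_{1\le i\le n-1}\{\difn{v.i}{v}{k}\}$, and I would bound $\mathbb P(|\mgn{v}{\dot v}{k+1}|\ge t)$ by treating the events $\{\mgn{v}{\dot v}{k+1}\ge t\}$ and $\{\mgn{v}{\dot v}{k+1}\le -t\}$ separately.

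\textbf{Upper tail.} Here it suffices to keep the term $i=1$: the event $\{\mgn{v}{\dot v}{k+1}\ge t\}$ forces $\difn{v.1}{v}{k}\ge t$, hence --- since $\ewn{v}{v.1}\ge 0$ --- either $\ewn{v}{v.1}\ge t/2$ or $|\mgn{v.1}{v}{k}|\ge t/2$. The first probability decays like $e^{-\beta t/2}$ by the second estimate of Lemma~\ref{lm:domweights}; the second decays exponentially by the induction hypothesis applied to the depth-$(|v|+1)$ word $v.1$. Both bounds are uniform in $n$.

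\textbf{Lower tail.} A union bound over the (unboundedly many) children gives $\mathbb P(\mgn{v}{\dot v}{k+1}\le -t)\le\sum_{i=1}^{n-1}\mathbb P(\mgn{v.i}{v}{k}\ge\ewn{v.i}{v}+t)$, and the whole point is to turn summability in $i$ into exponential smallness in $t$. For each $i$ I would use two complementary estimates. Since $\ewn{v.i}{v}\ge 0$, the $i$-th summand is at most $\mathbb P(|\mgn{v.i}{v}{k}|\ge t)$, exponentially small in $t$ by induction. Alternatively, fixing a small $\lambda>0$, it is at most $\mathbb P(\ewn{v.i}{v}\le\lambda i)+\mathbb P(|\mgn{v.i}{v}{k}|\ge\lambda i+t)$, where by the first estimate of Lemma~\ref{lm:domweights} together with $i^i/i!\le e^i$ the first term is at most $M_{|v|}\rho^i$ with $\rho=\alpha\lambda e^{1+\alpha\lambda}$, which is $<1$ once $\lambda$ is small. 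I would then split the sum at the threshold $N_t=\lceil t\rceil$, using the first estimate for $i\le N_t$ and the second for $i>N_t$: the head contributes at most $N_t\,M_{k,|v|+1}e^{-\beta_{k,|v|+1}t}$, bounded by a constant times $e^{-(\beta_{k,|v|+1}/2)t}$ after absorbing the linear prefactor, while the tail is a geometric series of ratio $\max(\rho,e^{-\beta_{k,|v|+1}\lambda})<1$ starting at index $\asymp t$, hence bounded by a constant times $e^{-ct}$ with $c=\min(\ln(1/\rho),\beta_{k,|v|+1}\lambda)$. Adding the two contributions --- and then the two tails --- gives (\ref{eq:unifdom2}) with constants $M_{k+1,|v|},\beta_{k+1,|v|}$ depending only on $k$ and $|v|$, uniform in $n$ because every ingredient (Lemma~\ref{lm:domweights} and the induction hypothesis) is.

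The genuinely delicate point is the lower tail: one must balance the super-exponential (factorial) smallness of $\mathbb P(\ewn{v}{v.i}\le s)$ in $i$ coming from Lemma~\ref{lm:domweights} against the linearly growing number $n-1$ of competing edges, and the $t$-dependent cutoff $N_t\asymp t$ is exactly the device that converts geometric summability in $i$ into exponential decay in $t$. Everything else --- choosing $\lambda$ small enough that $\rho<1$, absorbing polynomial prefactors into a slightly smaller exponent, checking the base case, and bookkeeping how the constants at level $k+1$, depth $|v|$ are assembled from those at level $k$, depth $|v|+1$ --- is routine.
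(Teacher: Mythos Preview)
Your argument is correct and its architecture matches the paper's: induction on $k$, the same upper-tail bound via the single child $v.1$ and Lemma~\ref{lm:domweights}, and for the lower tail the same union bound over children followed by the splitting $\mathbb P(\ewn{v}{v.i}\le r_i)+\mathbb P(\mgn{v.i}{v}{k}\ge t+r_i)$.

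The one genuine difference is the choice of the thresholds $r_i$. The paper takes a single $t$-dependent formula $r_i(t)=\delta i\,e^{-\gamma t}$ for all $i$, with $\gamma<\beta_{k,|v|+1}$ and $\delta$ small enough that $\alpha\delta e^{\alpha\delta-1}<1$; it then checks directly that both infinite series $\sum_i e^{-\beta_{k,|v|+1}(t+r_i(t))}$ and $\sum_i(\alpha r_i(t))^i e^{\alpha r_i(t)}/i!$ decay exponentially in $t$. Your approach instead uses a head/tail split at $N_t=\lceil t\rceil$, taking effectively $r_i=0$ for $i\le N_t$ (so only the message bound matters) and $r_i=\lambda i$ for $i>N_t$ (so geometric decay in $i$ converts to exponential decay in $t$). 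Your route is a bit more elementary and does not require guessing the shrinking-threshold form $\delta i e^{-\gamma t}$; the paper's route avoids the linear prefactor $N_t$ and the associated halving of the exponent. Both reach the same conclusion with constants depending only on $k$ and $|v|$.
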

\begin{proof}
  The proof is by induction over $k$. The base case of $k = 0$ follows
  trivially. Now, assume (\ref{eq:unifdom2}) is true for a given $k\in
  \mathbb N$. By Lemma \ref{lm:domweights} we can write for all $v\in {\cal
    V}^*$ and $t\in\mathbb R^+$:
\begin{eqnarray*}
  \mathbb P\Big(\mgn{v}{\dot{v}}{k+1}\geq
  t\Big) & = & \mathbb P\Big(\min_{1\leq
    i<n}\big\{\difn{v}{v.i}{k}\big\}\geq t\Big) \\
  & \leq & \mathbb P\Big(\ewn{v}{v.1}\geq
  \frac{t}{2}\Big)+\mathbb P\Big(\mgn{v.1}{v}{k}\leq -\frac t 2\Big)\\
  & \leq &
  M_{|v|}e^{-\frac{\beta}2t}+M_{k,|v|+1}e^{-\frac{\beta_{k,|v|+1}}2t}.
\end{eqnarray*}
The other side is slightly harder to obtain. Again by Lemma
\ref{lm:domweights} :
\begin{eqnarray*}
\mathbb P\Big(\mgn{v}{\dot{v}}{k+1}\leq
-t\Big) & = & \mathbb P\Big(\min_{1\leq
i<n}\big\{\difn{v}{v.i}{k}\big\}\leq -t\Big) \\
& \leq & \sum_{i=1}^{n-1} \mathbb P\Big(\ewn{v}{v.i}\leq r_i(t)\Big)+\sum_{i=1}^{n-1}\mathbb P\Big(\mgn{v.i}{v}{k}\geq t +r_i(t)\Big),\\
& \leq & M_{|v|}\sum_{i=1}^{\infty}\frac{\big(\alpha
r_i(t)\big)^ie^{\alpha
r_i(t)}}{i!}+M_{k,|v|+1}\sum_{i=1}^{\infty}e^{-\beta_{k,|v|+1}(t+r_i(t))},
\end{eqnarray*}
where the inequalities hold for any choice of the quantities $r_i(t)\geq
0$. Our proof thus boils down to the following simple question: can we 
choose the $r_i(t)$ such that
\begin{enumerate}
\item {$r_i(t)$ is large enough to ensure exponential vanishing
of $\displaystyle{f(t)=\sum_{i=1}^{\infty}e^{-\beta_{k,|v|+1}(t+r_i(t))}}$;}
\item {$r_i(t)$ is small enough to ensure exponential vanishing
of $\displaystyle{g(t)=\sum_{i=1}^{\infty}\frac{\big(\alpha
r_i(t)\big)^ie^{\alpha r_i(t)}}{i!}}$.}
\end{enumerate}
The answer is yes. Indeed, taking $r_i(t)=\delta ie^{-\gamma t}$
with $\gamma,\delta >0$ yields 
\begin{eqnarray*}
\frac 1 t \log f(t)\xrightarrow[t\to+\infty]{}\gamma-\beta_{k,|v|+1}
\textrm{ and }\frac 1 t \log g(t) \leq-\gamma + \frac 1 t \log \sum_{i=1}^{\infty}\frac{\big(\alpha\delta e^{\alpha\delta}i\big)^i}{i!}.
\end{eqnarray*}
Therefore, choosing any $\gamma < \beta_{k,|v|+1}$ is enough to ensure (i),
and taking $\delta$ small enough for $\alpha\delta e^{\alpha\delta-1} <
1$ will guarantee (ii) since the right-hand summand is equivalent to $\frac{(\alpha\delta  e^{\alpha\delta-1})^i}{\sqrt{2\pi i}}$  by Stirling's
formula.
\end{proof}
We now know enough to prove Lemma \ref{lm:unifctrl}.
\begin{proofof}{Lemma \ref{lm:unifctrl}}
Set $\delta>0$ small enough to ensure $\alpha\delta
e^{\alpha\delta-1} < 1$. Then, for $t\in\mathbb R^+$,
\begin{eqnarray*}
\lefteqn{\mathbb P\left(\argmin_{1\leq i<
n}\Big\{\difn{v}{v.i}{k} \Big\} \geq
 i_0\right)}\\
& \leq & \mathbb P\left(\mgn{v}{\dot{v}}{k+1}\geq t\right)+ \sum_{i = i_0}^{n-1}\mathbb
P\left(\ewn{v}{v.i}\leq
  \delta  i\right)+\sum_{i =  i_0}^{n-1}\mathbb P\left(\mgn{v.i}{v}{k}\geq \delta i -t\right)\\
& \leq &M_{k+1,|v|}e^{-\beta_{k+1,|v|}t}+
  M_{|v|}\sum_{i =  i_0}^{\infty}{\frac{(\alpha\delta e^{\alpha\delta}i)^i}{i!}}+M_{k,|v|+1}\sum_{i =  i_0}^{\infty}e^{-\beta_{k,|v|+1}(\delta
i-t)},
\end{eqnarray*}
by Lemmas \ref{lm:domweights} and \ref{lm:unifdom}. Letting $i_0\to\infty$ and finally $t\to\infty$ yields the desired result.
\end{proofof}

\section{Proof of the Lemmas in Section \ref{sec:step2}.}
\label{app2} Here we prove Lemmas
\ref{lm:boundedness}, \ref{lm:maxamplitude}, \ref{lemunifint} and
\ref{lm:welldefiniteness}.
\begin{proofof}{Lemma \ref{lm:boundedness}}
From $\int_{0}^{+\infty}F<\infty$ and the definition of $TF\colon x\mapsto
e^{-\int_{-x}^{+\infty}F}$, it follows that:
\begin{enumerate}
\item as $x\to+\infty, TF(x)=\Theta\left(e^{-\int_{-x}^{x_0}F}\right)$ for
  any fixed $x_0\in\mathbb R$;
\item as $x\to-\infty, TF(x)=1-\Theta\left(\int_{-x}^{+\infty}F\right)$.
\end{enumerate}
Now since $F$ is non-zero and non-increasing, there exists $\alpha,\beta>0$
(simply take $\beta = 1$) such that, for all
small enough $x\in\mathbb R$, $\alpha\leq F(x)\leq \beta.$ 
Replacing these inequalities into (i) above yields :
\begin{equation}
\textrm{as }x\to+\infty, TF(x)=O(e^{-\alpha x}) \textrm{ and
}TF(x)=\Omega(e^{-\beta x}).
\end{equation}
In particular, $TF$ satisfies the assumptions made on $F$, so
by induction $T^kF, k\geq 2$ also do, and we may therefore
iteratively apply (i)/(ii) to $TF$, $T^2F$ and $T^3F$. This
successively yields:
\begin{eqnarray}
\textrm{as }x\to-\infty, &&T^2F(x)=1-O(e^{\alpha x})\textrm{ and }T^2F(x)=1-\Omega(e^{\beta x});\\
\label{bigtheta}
\textrm{as }x\to+\infty, &&T^3F(x)=\Theta(e^{-x});\\
\textrm{as }x\to-\infty, &&T^4F(x)=1-\Theta(e^{x}).
\end{eqnarray}
Replacing $F$ by $TF$, we see that (\ref{bigtheta}) also holds for $T^4F$, so we end up with $T^4F(x)$ being both $\Theta(e^{-x})$
as $x\to+\infty$ and $1-\Theta(e^{x})$ as $x\to-\infty$. Besides, on any
compact set, $T^4F$ takes values within a compact subset
of $]0,1[$ by monotonicity. Hence the boundedness of
${\widehat{T^4F}\colon x\mapsto x+\ln\left(\frac{T^4F(x)}{1-T^4F(x)}\right)}$ over $\mathbb R$.
\end{proofof}

\begin{proofof}{Lemma \ref{lm:maxamplitude}}
It follows from the properties (\ref{decr}) and (\ref{tshift})
of $T$ that for every $m, M \in\mathbb R$,
$$\theta_mF^* \leq F \leq \theta_MF^* \Longrightarrow  \theta_{-M}F^* \leq TF
\leq \theta_{-m} F^*.$$ 
Once rewritten in terms of the $\widehat{\cdot}$ transform, this becomes:
$$m \leq \widehat{F} \leq M \Longrightarrow -M\leq \widehat{TF} \leq -m,$$
and the desired inequalities follow by taking $m=\inf_{\mathbb R}{\widehat{F}}$
and $M=\sup_{\mathbb R}{\widehat{F}}$. Now assume $\widehat F$ is
not constant on $\mathbb R$. The right-continuity (of $F$ and
hence) of $\widehat{F}$ ensures existence of an open interval $(a,b)$ such that
$M'=\sup_{(a,b)}{\widehat F}<\sup_{\mathbb R}{\widehat{F}}=M$.
Then, for $x\geq -a$,
\begin{eqnarray*}
TF(x)  =  \exp{\left(-\int_{-x}^{+\infty}{F}\right)}
      & \geq & \exp{\left(-\int_{-x}^{a}{\theta_{M}F^*}-\int_{a}^{b}{\theta_{M'}F^*}-\int_{b}^{\infty}{\theta_{M}F^*}\right)}\\
      &=& \kappa\times \theta_{-M}F^*(x)\ \textrm{ with }\ \kappa=\exp{\left(\int_{a}^{b}{\left(\theta_MF^*-\theta_{M'}F^*\right)}\right)}>1.
\end{eqnarray*}
Applying $T$ again implies that for every $x\in\mathbb R$,
\begin{equation*}
\label{majj} \left\{
\begin{array}{l}
 x\leq a \Rightarrow T^2F(x)  \leq  \displaystyle{\exp{\left(-\kappa\int_{-x}^{+\infty}{\theta_{-M}F^*}\right)} = \left(\theta_{M}F^*(x)\right)^\kappa} ;\\
 x\geq a \Rightarrow T^2F(x)  \leq  \displaystyle{\exp{\left(-\int_{-x}^{-a}{\theta_{-M}F^*}-\kappa\int_{-a}^{+\infty}{\theta_{-M}F^*}\right)}
=\kappa'\times\theta_{M}F^*(x)},
\end{array}
\right.
\end{equation*}
where $\kappa'=\left(\theta_{M}F^*(a)\right)^{\kappa-1}<1$. Now, simply
observing that both $\left(\theta_{M}F^*(x)\right)^\kappa$ and
$\kappa'\times\theta_{M}F^*(x)$ are strictly less than $\theta_{M}F^*(x)$ is
already enough for claiming that $\widehat{T^2F}(x)<M$ for all $x\in\mathbb
R$. In order to conclude that $\sup_{\mathbb R}{\widehat{T^2F}}<M$, we only
need to check that the inequality remains strict at $\pm \infty$:
\begin{equation*}
\left\{
\begin{array}{l}
 \displaystyle{x\leq a \Rightarrow \widehat{T^2F}(x)\leq x +  \ln{\left(\frac{\left(\theta_{M}F^*(x)\right)^\kappa}{1-\left(\theta_{M}F^*(x)\right)^\kappa}\right)}\xrightarrow[x\to-\infty]{}M-\ln{\kappa}<M;}\\
 \displaystyle{x\geq a \Rightarrow \widehat{T^2F}(x)\leq x +  \ln{\left(\frac{\kappa'\times\theta_{M}F^*(x)}{1-\kappa'\times\theta_{M}F^*(x)}\right)}\xrightarrow[x\to+\infty]{}M+\ln{{\kappa'}}<M.}
\end{array}
\right.
\end{equation*}
The inequality $\inf_{\mathbb R}{\widehat{T^2F}}>\inf_{\mathbb R}{\widehat{F}}$ can be obtained in exactly the same way; we skip the details. 
\end{proofof}
\newpage
\begin{proofof}{Lemma \ref{lemunifint}}Fix $k \geq 2$. From  the inequality $|e^{a}-e^b|\leq|a-b|$ for
  all $a,b\leq 0$, and the fact that $0\leq T^{k-2}F\leq 1$, it follows that  $T^{k-1}F\colon x \mapsto
\exp\left(-\int_x^{\infty}T^{k-2}F\right)$ is Lipschitz
continuous with Lipschitz constant $1$. Therefore, $T^{k}F\colon x
\mapsto \exp\left(-\int_x^{\infty}T^{k-1}F\right)$ is differentiable on $\mathbb R$  and for all $x \in \mathbb R$,
 \begin{equation*}
\label{eq:diff} (T^{k}F)'(x) = -T^{k}F(x)\,T^{k-1}F(-x).
\end{equation*}
Hence, $\widehat{ T^{k}F}\colon x \to x+\ln\left(\frac
{T^kF(x)}{1-T^kF(x)}\right)$ is (continuously) differentiable on $\mathbb R$, and for all $x \in \mathbb R$,
\begin{equation}
\label{varphix}
(\widehat{T^{k}F})'(x) = \frac{1-T^{k}F(x)-T^{k-1}F(-x)}{1-T^{k}F(x)}.
\end{equation}
It now remains to check the uniform integrability of $\{(\widehat T^kF)', k\geq 3\}$. Recall that Lemma \ref{lm:maxamplitude} ensures uniform boundedness of the
family $\{\widehat {T^kF},k\geq 0\}$. In other
words, there exists $M\geq 0$ such that:
$$\forall k\geq 0,\, \theta_{-M}F^*\leq T^kF\leq \theta_MF^*.$$
Plugging it into (\ref{varphix}) immediately
yields the uniform bound $|(T^kF)'(x)|\leq \frac{e^{2M}-1}{1+e^{x+M}}$,
which is  enough for uniform integrability on $(0,+\infty)$. For
$(-\infty,0)$ now, observe that the numerator in (\ref{varphix}) vanishes as $x\to-\infty$ and is a
continuously differentiable function of $x$ as soon as $k\geq 3$, with
derivative $$x\mapsto T^{k-1}F(-x)\big(T^kF(x)-T^{k-2}F(x)\big).$$ Therefore,
for all  $k\geq 3$ and $x\in \mathbb R$, 
\begin{eqnarray*}
\big|1-T^{k}F(x)-T^{k-1}F(-x)\big| & = & \left|\int_{-\infty}^x{T^{k-1}F(-u)\big(T^kF(u)-T^{k-2}F(u)\big)\,du}\right|\\
 & \leq & \int_{-\infty}^x{\theta_MF^*(-u)\big(\theta_MF^*(u)-\theta_{-M}F^*(u)\big)du}.
\end{eqnarray*}
Now, the above integrand is $O(e^{2u})$ as
$u\to-\infty$, so the integral is $O(e^{2x})$ as
$x\to-\infty$, whereas the denominator in
(\ref{varphix}) remains always above $1-\theta_MF^*(x)=\Theta(e^{x})$ as $x
\to -\infty$. Thus the resulting bound on $\sup_{k\geq 3}|(\widehat{T^{k}F})'|(x)$ is
$O(e^x)$ as $x \to -\infty$, which is enough for uniform integrability on $(-\infty,0)$.
\end{proofof}

\begin{proofof}{Lemma \ref{lm:welldefiniteness}}
By the
definition of $\cal T$ and the fact that the $k-$step messages sent to $v$ by all its children are i.i.d., we find that for every $i\geq 2$,
\begin{equation}
\mathbb P\left(\dift{v.i}{v}{k}\leq\dift{v.1}{v}{k}\right)=
\mathbb P\left(\xi_{i-1}\leq X_k-Y_k\right),\\
\end{equation}
where $X_k$ and $Y_k$ 
are i.i.d. with distribution $T^kF$ and $(\xi_i)_{i\geq 1}$ is 
a Poisson point process with rate $1$ independent of $X_k,Y_k$.
Now, observe that
\begin{equation}
\label{eq:infinitesum} \sum_{i=1}^\infty{\mathbb P(\xi_{i}\leq
X_k-Y_k)} = \sum_{i=1}^\infty{\mathbb
E\left[\int_0^{{(X_k-Y_k)}^+}{e^{-x}\frac{x^{i-1}}{(i-1)!}\,dx}\right]}
= \mathbb E\left[{\big(X_k-Y_k)}^+\right]\leq E\left[|X^*|\right]+\sup_{\mathbb R}{|\widehat{T^kF}|},
\end{equation}
where $X^*$ is an $F^*-$distributed random variable. It follows from
the previous sub-section that $\sup_{\mathbb R}|\widehat{T^kF}|<+\infty$
as soon as $k\geq 4$, so we can apply the Borel-Cantelli Lemma to get that
 $$\dift{v.i}{v}{k}\geq\dift{v.1}{v}{k}\textrm{ for all large
enough $i$}$$ with probability one, and hence the argmin is well defined. Even better, the boundedness of $\widehat{T^kF}$ derived in the previous sub-section is in fact
uniform in ${k\geq 4}$, and this is enough for (\ref{eq:unifdomk}) to hold.
\end{proofof}

\end{document}